\newtheorem{theorem}{Theorem}[section]
\newtheorem{lemma}[theorem]{Lemma}
\newtheorem{proposition}[theorem]{Proposition}
\newtheorem{definition}[theorem]{Definition}
\theoremstyle{remark}
\newtheorem{nota}[theorem]{}
\newtheorem{remark}[theorem]{Remark}
\newtheorem*{claim*}{Claim}
\def\R{\mathbb R}
\def\C{\mathbb C}
\def\N{\mathbb N}
\def\Q{\mathcal Q}
\def\B{\mathcal B}
\def\NN{\mathcal N}
\def\P{\mathbb P}
\def\Id{\operatorname{Id}}
\def\GL{\operatorname{GL}}
\def\PGL{\operatorname{PGL}}
\def\Iso{\operatorname{Iso}}
\def\ch{\operatorname{ch}}
\def\norma{\|\cdot\|}
\def\nX{\|\cdot\|_X}
\def\RnX{(\R^n,\|\cdot\|_X)}
\def\XnX{(X,\|\cdot\|_X)}
\def\RnY{(\R^n,\|\cdot\|_Y)}
\def\nY{\|\cdot\|_Y}
\def\nZ{\|\cdot\|_Z}
\def\nnn{\big|\!\big|\!\big|}
\def\nnnn{\big|\!\big|\!\big|\cdot\big|\!\big|\!\big|}
\def\coc{\negthinspace\sim}
\def\opX{\tilde{X}}
\def\wopX{\widetilde{X}}
\def\e{\varepsilon}
\def\bij{\longleftrightarrow}
\title{The Banach space of quasinorms on a finite-dimensional space}
\begin{document}

\author
{J.~Cabello~S\'anchez, D.~Morales~Gonz\'alez}

\thanks
{Departamento de Matem\'{a}ticas and Instituto de Matem\'aticas. 
Universidad de Extremadura,
Avda. de Elvas s/n, 06006, Badajoz, Spain \\
email:\ coco@unex.es (corresponding author), danmorg@unex.es}  

\thanks{Keywords: Quasinorms, finite-dimensional spaces, Banach spaces, Banach-Mazur compactum.}

\thanks{Mathematics Subject Classification: 46B20, 47A30}

\begin{abstract}
We show that the set of continuous quasinorms on a finite-dimensional linear space, after
quotienting by the dilations, has a natural structure of Banach space. 
Our main result states that, given a finite-dimensional vector 
space $E$, the pseudometric defined in the set of continuous quasinorms 
$\mathcal{Q}_0=\{\|\cdot\|:E\to\mathbb{R}\}$ as 
$$d(\|\cdot\|_X,\|\cdot\|_Y)=\min\{\mu:\|\cdot\|_X
\leq\lambda\|\cdot\|_Y\leq\mu\|\cdot\|_X\text{ for some }\lambda \}$$
induces, in fact, a complete norm when we take the obvious quotient 
$\mathcal{Q}=\mathcal{Q}_0/\!\sim$ and define the appropriate operations on $\mathcal{Q}$. 

We finish the paper with a little explanation of how this space and the Banach-Mazur 
compactum are related. 
\end{abstract}

\maketitle

\section{Introduction}

Our main goal in this short paper is to show that the set of continuous quasinorms defined 
on $\R^n$ for some $n\geq 2$ has a, somehow, canonical structure of Banach space 
after quotienting by the proportional quasinorms.

For this to make sense, we first need to endow this set with a vector space structure --this 
will be done by means of something that everyone can expect to represent the 
mean of two quasinorms: $\sqrt{\nX\nY}$ for each pair of quasinorms $\nX, \nY$. 
Once the mean is given, we just need to choose the element of the space 
which will play the rôle of the origin in order to determine a vector space structure, 
in the present paper we have chosen $(\R^n,\norma_2)$. 
Of course, this may seem anything but canonical. On the bright side, the choice 
of an origin will not affect any property of the newly defined vector (or Banach) space. 
For example, we may consider $C[0,1]$ endowed with the scalar multiplication 
$(\lambda\star f)(x)=\lambda(f(x)-1)$ and the addition $(f\oplus g)(x)=f(x)+g(x)-1$ 
--of course, the same can be done with any other function in $C[0,1]$ instead 
of 1. Now, if we define a norm in $(C[0,1],\oplus,\star)$ as $\|f\|=\max\{|f(x)-1|\}$ 
then we have a Banach space structure $(C[0,1],\oplus,\star,\norma)$ that is 
indistinguishable from the usual $(C[0,1],+,\cdot,\norma_{\infty})$, in the sense 
that the map 
$$(C[0,1],+,\cdot,\norma_{\infty})\to (C[0,1],\oplus,\star,\norma),\quad f\mapsto f+1$$ 
is a linear isometry. What we have done is equivalent to considering the affine 
structure of $C[0,1]$ and taking two different choices for the origin. 
This is doable because every norm gives a translation invariant metric. 

Once the operations are given, we have to define the norm. This idea is not 
ours, but taken from A.~Khare's preprint \cite{Apoorva}. Given two continuous 
quasinorms $\nX, \nY$, the distance between them is defined as 
$$d(\nX,\nY)=\min\{\mu:\nX\leq\lambda\nY\leq\mu\nX\text{\ for\ some\ }\lambda>0\},$$
where the order relation is the pointwise order: 
$\nX\leq\lambda\nY$ means $\|x\|_X\leq \lambda\|x\|_Y$ for every $x\in\R^n$. 
Of course, two quasinorms are proportional if and only if the distance between them 
is 1, so we must take the reasonable quotient 

\vspace{3 mm}

$$\spadesuit\quad\nX\sim\nY \text{ if and only if }
\nX=\lambda\nY \text{ for some }\lambda\in(0,\infty)$$  

\vspace{3 mm}
\noindent to make $d$ an actual (multiplicative) metric. So, defining 
\begin{equation}\label{eqnlm}
d([\nX],[\nY])=\min\{\mu:\nX\leq\lambda\nY\leq\mu\nX\text{\ for\ some\ }\lambda>0\} 
\end{equation}
we have a distance between the equivalence classes of quasinorms that turns 
out to induce a norm when we endow $\{\text{Continuous quasinorms on }\R^n\}/\coc$ with 
the above explained operations. 

This paper is far from being the first one in which the sets of (quasi) norms 
are endowed with some structure. The best known structure given to the set of 
norms on a finite-dimensional space is the Banach-Mazur pseudometric defined as 
\begin{equation}\label{eqBM}
d(\nX,\nY)=\inf\{\|T\|\|T^{-1}\|\},
\end{equation}
where $T$ runs over the set of linear isomorphisms $T:\RnX\to\RnY.$
It is well known that, after taking the appropriate quotient, this pseudometric 
turns out to be a metric that makes the space to be compact --there is still 
significant interest on this topic, see, e.g., \cite{Antonyan, Szarek, Youssef}. 

The present paper is neither the first one about, say, mixing pairs of 
norms to obtain something new. In this setting, interpolation of (quasi) normed 
spaces --or even more general spaces-- has been the main topic for at least half 
a century, see \cite{kalton1992factorization, Peetre, Slodkowski}. For the reader 
interested in interpolation, we suggest~\cite{BL} and the very interesting 
\cite{semmes}. A nice paper on interpolation in quasinormed spaces is~\cite{Silva}.
To the best of our knowledge, this paper is the first where someone considers the 
kind of interpolation that we have in Definition~\ref{definterp}, that is 
$$\norma_{(X,Y)_\theta}=\nX^\theta\nY^{1-\theta}.$$
There is a very good reason to avoid this kind of interpolation in the \textit{normed} 
space literature. Namely, in Remark~\ref{Nocvx} we provide an example to show that 
the mean of a pair of norms on $\R^2$ does not need to be a norm but a quasinorm. 

\section{Notations and preliminary results}

We will consider some positive integer $n$ fixed throughout the paper. 
Every vector space will be over $\R$; observe that any $\C^n$ can be seen 
as $\R^{2n}$. Moreover, we will consider from now on the vector space 
$\R^n$ endowed with its only topological vector space structure, i.e., 
the one given by $\norma_2$. 

\begin{definition}
A map $\norma:\R^n\to[0,\infty)$ is a quasinorm if the following conditions hold: 
\begin{enumerate}
\item $\|x\|=0$ if and only if $x=0.$
\item $\|\lambda x\|=|\lambda|\|x\|$ for every $\lambda\in\R, x\in\R^n.$
\item There exists $k>0$ such that $\|x+y\|\leq k(\|x\|+\|y\|)$ for every $x,y\in\R^n$.
\end{enumerate}
If the map $\norma$ is continuous then we say that it is a continuous quasinorm. 
If $k$ can be chosen to be 1, then $\norma$ is a norm and it is continuous. 
\end{definition} 

\begin{nota}\label{BXSX}
As is customary, given a quasinormed space $(\R^n,\nX)$, we will denote its unit 
(closed) ball as $B_X$, its unit sphere as $S_X$.  
\end{nota}

\begin{definition}\label{balanced}
Some subset $B\subset \R^n$ is bounded if, for every 
neighbourhood $U$ of 0 there is $M\in(0,\infty)$ such that $B\subset MU$. 
$B\subset \R^n$ is balanced when $\lambda B\subset B$ for every $\lambda\in[-1,1]$. 
\end{definition}

\begin{definition}\label{minkowski}
For $B\subset \R^n$, the Minkowski functional of $B$ 
is $\rho_B(x)=\inf\{\lambda\in(0,\infty):x\in\lambda B\}$. 
\end{definition}

It is quite well known that the quasinorms on a topological vector space are in 
correspondence with the bounded, balanced neighbourhoods of the origin, see the 
beginning of Section~2 in~\cite{KaltonThree}, and, for a proof of such a key 
result the reader may check~\cite[Theorem~4]{Hyers}. The version that we will 
use is the following, where we use that $\norma_2$ gives the only topological 
vector space structure to $\R^n$ and $B_2$ denotes the Euclidean unit ball of $\R^n$: 

\begin{theorem}\label{rho}
The Minkowski functional $\rho_B$ of a given subset $B\subset \R^n$ is a 
quasinorm if and only if $B$ fulfils the following: 
\begin{itemize}
\item $B$ contains $\e B_2$ for some $\e>0$. 
\item For every $\lambda\in[-1,1]$ one has $\lambda B\subset B$ -- i.e., $B$ is balanced. 
\item $B$ is contained in $MB_2$ for some $M>0$. 
\end{itemize}
In this case, $\rho_B$ is a continuous quasinorm if and only if 
$B$ is closed. 
Moreover, $\rho_B$ is a norm if and only if the above hold and 
$\frac {x+y}2\in B$ for any pair $x,y\in B.$
\end{theorem}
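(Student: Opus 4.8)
The plan is to reduce everything to the remark that the two outer conditions on $B$ amount to a two-sided comparison with the Euclidean norm: $\e B_2\subset B$ forces $\rho_B(x)\le\frac1\e\|x\|_2$ and $B\subset MB_2$ forces $\rho_B(x)\ge\frac1M\|x\|_2$, so that
$$\tfrac1M\|x\|_2\le\rho_B(x)\le\tfrac1\e\|x\|_2\qquad(x\in\R^n).$$
Granted the three bullets, the quasinorm axioms then come out directly: positive definiteness is immediate from the left inequality together with $\rho_B(0)=0$ (which holds because $0\in\e B_2\subset B$); absolute homogeneity $\rho_B(\lambda x)=|\lambda|\rho_B(x)$ follows from $B$ being balanced, since this gives $-B=B$ and $\tfrac t\lambda B=\tfrac t{|\lambda|}B$, after which it is a routine manipulation of the defining infimum; and the quasi-triangle inequality holds with constant $k=M/\e$ since $\rho_B(x+y)\le\frac1\e\|x+y\|_2\le\frac1\e(\|x\|_2+\|y\|_2)\le\frac M\e(\rho_B(x)+\rho_B(y))$. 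For the converse I would first record that \emph{every} quasinorm $q$ on $\R^n$ is comparable to $\|\cdot\|_2$: expanding $x=\sum x_ie_i$ and iterating the quasi-triangle inequality bounds $q$ on the Euclidean unit sphere, hence $q\le C\|\cdot\|_2$, and the infimum of $q$ over that sphere cannot vanish, for $x_k\to x$ on the sphere with $q(x_k)\to0$ would give $q(x)\le k\bigl(C\|x-x_k\|_2+q(x_k)\bigr)\to0$ while $x\ne0$. Since $\rho_{\{q\le1\}}=q$ for every positively homogeneous $q$, one may replace $B$ by the canonical ball $\{\rho_B\le1\}$, for which $q\asymp\|\cdot\|_2$ gives $\e B_2\subset B\subset MB_2$, while absolute homogeneity of $q$ gives that $B$ is balanced.

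For the continuity clause I would argue through sublevel sets. When $B$ is closed it is in particular star-shaped about $0$, so $\{\rho_B\le t\}=\bigcap_{s>t}sB$ is closed and $\rho_B$ is lower semicontinuous; the matching upper semicontinuity --- equivalently, openness of $\{\rho_B<t\}$ --- is then what has to be extracted from closedness of $B$ and the upper bound $\rho_B\le\frac1\e\|\cdot\|_2$, while the converse, that continuity of $\rho_B$ makes $B=\{\rho_B\le1\}$ closed, is immediate. For the norm clause the crux is the equivalence between subadditivity of $\rho_B$ and midpoint-convexity of $B$: midpoint-convexity propagates by induction to every dyadic convex combination, so, given $a>\rho_B(x)$ and $b>\rho_B(y)$, writing $\frac{x+y}{a+b}=\frac a{a+b}\cdot\frac xa+\frac b{a+b}\cdot\frac yb$ and shrinking $a,b$ toward $\rho_B(x),\rho_B(y)$ along values making the weight $\frac a{a+b}$ dyadic yields $\rho_B(x+y)\le a+b\to\rho_B(x)+\rho_B(y)$; conversely subadditivity forces $\{\rho_B\le1\}$ convex. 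A subadditive quasinorm is a norm, and a norm on $\R^n$ is automatically continuous, so this case is absorbed into the previous one.

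The step I expect to be the main obstacle is the continuity characterisation. Since the Minkowski functional is blind to the difference between $B$ and companion sets such as $\{\rho_B<1\}$ and $\{\rho_B\le1\}$, all three ``only if'' directions should really be read with $B$ normalised to its canonical closed unit ball; and even then the implication ``$B$ closed $\Rightarrow$ $\rho_B$ upper semicontinuous'' has to be proved rather than quoted, because a priori a quasinorm on $\R^n$ can be lower semicontinuous without being upper semicontinuous. The second point needing some care is the dyadic-approximation argument that upgrades midpoint-convexity of $B$ to genuine subadditivity of $\rho_B$. Beyond these, the argument is the standard bookkeeping for Minkowski functionals, organised around the Euclidean sandwich above.
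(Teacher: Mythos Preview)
The paper does not supply its own proof of this theorem: it is stated as background, with the reader directed to \cite[Theorem~4]{Hyers} for an argument. So there is no in-paper proof to compare your sketch against.

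On the substance: your treatment of the quasinorm equivalence via the Euclidean sandwich and of the norm clause via dyadic weights is sound, and your observation that the ``only if'' directions require replacing $B$ by $\{\rho_B\le1\}$ is a genuine correction to the statement as literally written (deleting a single boundary point from $B_2$ leaves $\rho_B=\|\cdot\|_2$ while destroying balancedness). The step you flag as the main obstacle --- ``$B$ closed $\Rightarrow\rho_B$ upper semicontinuous'' --- is not merely unproved in your sketch but in fact false under the stated hypotheses. Take $B=B_2\cup\bigl([-2,2]\times\{0\}\bigr)\subset\R^2$: this set is closed, balanced, contains $B_2$ and lies inside $2B_2$, so all three bullets hold and your sandwich yields a quasinorm with constant $k=2$; yet $\rho_B(1,0)=\tfrac12$ while $\rho_B(1,y)=\sqrt{1+y^2}\to1$ as $0\ne y\to0$, so $\rho_B$ is only lower semicontinuous. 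The continuity characterisation therefore needs an extra hypothesis on $B$ (for instance that $B=\overline{\operatorname{int}B}$), presumably present in the cited source but omitted in the paper's restatement. Your instinct to isolate this step was exactly right; the resolution is to amend the statement rather than to search for the missing argument.
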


We could even replace the first and third items in~\ref{rho} by \\ 
``If $B'$ is the unit ball of some quasinorm on $\R^n$ 
then there are $\e, M>0$ such that $\e B\subset B' \subset MB$."
Observe that this implies that the constants $\lambda, \mu$ in 
(\ref{eqnlm}) actually exist. 

We will deal in this note with $\Q_0=\{\text{Continuous quasinorms\ 
defined\ on\ }\R^n\}$ and $\Q=\Q_0/\coc$, where 
two quasinorms are equivalent if and only if they are proportional, endowed 
with the multiplicative distance on $\Q$ defined in~\cite{Apoorva} by 
A.~Khare and given by 
\begin{equation}\label{defdis}
d([\nX],[\nY])=\min\{\mu:\nX\leq\lambda\nY\leq\mu\nX\text{\ for\ some\ }\lambda>0\}. 
\end{equation}

In the same paper, it is shown that $d$ endows  
$\NN=\{\text{Norms\ defined\ on\ }\R^n\}/\coc$ 
with a complete metric space structure. To keep the 
notations consistent, we will write $\NN_0$ for $\{\text{Norms\ defined\ on\ }\R^n\}$. 
The infimum in (\ref{defdis}) exists because in $\R^n$ every pair of 
continuous quasinorms are equivalent and, moreover, by the continuity 
of the quasinorms, it is pretty clear that the minimum is attained. A 
nice feature of Khare's distance is that, in $\R^2$, it 
distinguishes the $\max$-norm from $\norma_1$. In some sense, these norms 
are as different as two norms can be, but the usual distances between norms, 
such as the Banach-Mazur or the Gromov-Hausdorff, make them indistinguishable. 

\section{The main result}

Throughout this section, we will only consider continuous quasinorms.

Our first goal is to show that $d$ is actually a multiplicative distance on $\Q$. 
For this, the following lemma will be useful. 

\begin{lemma}\label{soloy} 
Take any pair of quasinorms $\nX,\nY$, $\lambda>0$ and $\mu\geq 1$ such that 
$\nX\leq\lambda\nY\leq\mu\nX$. Then, $\mu$ is minimal if and only if 
both $S_X\cap\lambda^{-1} S_Y$ and $\lambda^{-1} S_Y\cap\mu^{-1} S_X$ are non-empty. 

Moreover, the distance between $[\nX]$ and $[\nY]$ is $\mu$ if and only 
if there are representatives $\nX$ and $\nY$ such that:
\begin{enumerate}
\item One has $\nX\leq\nY\leq\mu\nX$.
\item There are $x\in S_X\cap S_Y$ and $y\in S_Y\cap\mu^{-1}S_X$.
\end{enumerate}
In particular, the distance $\mu$ is always attained. 
\end{lemma}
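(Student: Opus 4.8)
The plan is to work directly with the definition $d([\nX],[\nY])=\min\{\mu:\nX\leq\lambda\nY\leq\mu\nX\text{ for some }\lambda>0\}$ and to use the scaling freedom in choosing representatives. First I would fix representatives $\nX,\nY$ and let $\lambda_0>0$ be a value where the minimum defining the distance is attained, so $\nX\leq\lambda_0\nY\leq\mu\nX$ with $\mu=d([\nX],[\nY])$. Replacing $\nY$ by $\lambda_0\nY$ (a legitimate change of representative), we may assume $\lambda=1$, i.e. $\nX\leq\nY\leq\mu\nX$; this is condition (1). The existence of the minimum itself is guaranteed because any two continuous quasinorms on $\R^n$ are equivalent (as noted after Theorem~\ref{rho}), and attainment follows from compactness of the unit spheres together with continuity of the quasinorms: the set of admissible $(\lambda,\mu)$ is closed.

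The substantive part is Lemma~\ref{soloy}'s first assertion, which I would prove as two implications. For the "only if" direction, suppose $\nX\leq\lambda\nY\leq\mu\nX$ with $\mu$ minimal. If $S_X\cap\lambda^{-1}S_Y=\emptyset$, then by continuity and compactness of $S_X$ the inequality $\|x\|_X\leq\lambda\|x\|_Y$ is strict on all of $S_X$, hence (by homogeneity and compactness) there is $\delta>1$ with $\delta\nX\leq\lambda\nY$; but then $\delta\nX\leq\lambda\nY\leq\mu\nX$ gives $\lambda\nY\leq(\mu/\delta)\cdot\delta\nX$... more carefully, one rescales to contradict minimality of $\mu$ — rewriting $\nX\leq(\lambda/\delta)\nY\leq(\mu/\delta)\nX$ shows $\mu/\delta$ is admissible with representative $\nX$ and multiplier $\lambda/\delta$, contradicting minimality. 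Symmetrically, if $\lambda^{-1}S_Y\cap\mu^{-1}S_X=\emptyset$, the inequality $\lambda\|y\|_Y\leq\mu\|y\|_X$ is strict on $S_Y$, so $\lambda\nY\leq(\mu/\delta')\nX$ for some $\delta'>1$, and again $\nX\leq\lambda\nY\leq(\mu/\delta')\nX$ contradicts minimality. For the "if" direction: if both intersections are non-empty, pick $x\in S_X$ with $\|x\|_Y=\lambda^{-1}$ (so $\lambda\|x\|_Y=1=\|x\|_X$) and $y$ with $\|y\|_Y=\lambda^{-1}$ and $\|y\|_X=\mu^{-1}$ (so $\lambda\|y\|_Y=1=\mu\|y\|_X$); the first point shows no smaller multiplier on the left gap can work for this $\lambda$, and the second forces $\mu$ to be exactly the ratio, so decreasing $\mu$ while keeping the sandwich is impossible — any admissible quadruple must have $\mu'\geq\lambda'\|y\|_Y/\|y\|_X$ and $\lambda'\leq\|x\|_X/\|x\|_Y$, and chasing these inequalities yields $\mu'\geq\mu$.

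From here the "moreover" is essentially a bookkeeping translation: given the distance equals $\mu$, take a minimal $\lambda_0$, rescale $\nY\mapsto\lambda_0\nY$ so that the multiplier becomes $1$, obtaining (1), and then the first part with $\lambda=1$ gives $x\in S_X\cap S_Y$ and $y\in S_Y\cap\mu^{-1}S_X$, which is (2). Conversely, if representatives satisfy (1) and (2), then $\lambda=1$ is an admissible multiplier with sandwiching constant $\mu$, and the existence of $x$ and $y$ shows via the first part that $\mu$ is minimal \emph{for this} $\lambda$, but minimality for one representative with its natural multiplier is what computes the distance — so $d([\nX],[\nY])=\mu$. The main obstacle I anticipate is being careful with the rescaling arithmetic in the "only if" direction: the strict inequality on a compact sphere must be converted into a uniform multiplicative gap and then correctly redistributed between $\lambda$ and $\mu$ so that the contradiction with minimality is clean; getting the direction of each inequality right (left gap versus right gap) is where an error would most easily creep in. Everything else is compactness and continuity of quasinorms, both available on $\R^n$.
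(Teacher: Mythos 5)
Your argument is correct and is essentially the paper's own proof translated from inclusions of unit balls into pointwise inequalities: in both cases the emptiness of an intersection is converted, via compactness and continuity, into a uniform multiplicative gap $1+\e$ (your $\delta$) that is then redistributed between $\lambda$ and $\mu$ to contradict minimality, and the converse plus the ``moreover'' part are routine rescaling bookkeeping (which the paper simply calls ``clear''). The only slip is in the ``if'' direction, where evaluating $\nX\leq\lambda'\nY$ at $x$ gives $\lambda'\geq\|x\|_X/\|x\|_Y=\lambda$ (not $\leq$); with that sign corrected, $\mu'\geq\mu\lambda'/\lambda\geq\mu$ follows exactly as you claim, so the conclusion stands.
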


\begin{proof}
The chain of inequalities in the statement is equivalent to the chain of inclusions 
$\mu^{-1}B_X\subset\lambda^{-1}B_Y\subset B_X$, so suppose that 
$S_X\cap\lambda^{-1} S_Y=\emptyset$. The distance between the compact sets 
$\lambda^{-1}B_Y$ and $S_X$ is attained, so if they do not meet, then the distance 
between them is strictly positive and we can multiply the sets $\lambda^{-1}B_Y$ 
and $\mu^{-1}B_X$ by $1+\varepsilon$ for some $\varepsilon>0$ and the contentions 
are still fulfilled. So, if we define $\mu'=\dfrac{\mu}{1+\varepsilon}$ we obtain 
$\mu'^{-1}B_X\subset(1+\varepsilon)\lambda^{-1}B_Y\subset B_X$. So, 
$\mu$ would not be minimal because $\mu'<\mu$. The case 
$\lambda^{-1} S_Y\cap\mu^{-1} S_X=\emptyset$ is analogous. 

The other implication is clear. 
\end{proof}

\begin{proposition}
The function $d$ defined in~{\em(\ref{defdis})} is a multiplicative distance. 
\end{proposition}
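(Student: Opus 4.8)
The plan is to verify the three axioms of a multiplicative metric on $\Q$: that $d$ is well defined on equivalence classes and takes values in $[1,\infty)$ with $d([\nX],[\nY])=1$ exactly when $[\nX]=[\nY]$; that $d$ is symmetric; and that it satisfies the multiplicative triangle inequality $d([\nX],[\nZ])\le d([\nX],[\nY])\,d([\nY],[\nZ])$. That the defining infimum in (\ref{defdis}) is attained (so the formula makes sense at all) is already granted by Lemma~\ref{soloy}, together with the remark after Theorem~\ref{rho} that any two continuous quasinorms on $\R^n$ are equivalent.

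First I would check that the right-hand side of (\ref{defdis}) does not depend on the chosen representatives. Replacing $\nY$ by $c\nY$ merely replaces the witnessing constant $\lambda$ by $\lambda/c$ and leaves $\mu$ untouched; replacing $\nX$ by $c\nX$ turns $\nX\le\lambda\nY\le\mu\nX$ into $\nX\le(\lambda/c)\nY\le\mu\nX$, again leaving $\mu$ untouched. Hence $d$ is a genuine function on $\Q\times\Q$. Positivity is immediate: evaluating $\nX\le\lambda\nY\le\mu\nX$ at any fixed $x\ne 0$ gives $\|x\|_X\le\mu\|x\|_X$, so every admissible $\mu$ satisfies $\mu\ge 1$, whence $d\ge 1$. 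If $d([\nX],[\nY])=1$, picking the optimal $\lambda$ gives $\nX\le\lambda\nY\le\nX$, so $\nX=\lambda\nY$ and $[\nX]=[\nY]$; conversely if $\nX=\lambda\nY$ then $\mu=1$ is admissible and $d=1$.

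Next, symmetry. Suppose $\nX\le\lambda\nY\le\mu\nX$ realizes $d([\nX],[\nY])=\mu$. Rewriting, $\tfrac1\lambda\nX\le\nY$ and $\nY\le\tfrac\mu\lambda\nX$, so with $\lambda'=\mu/\lambda$ we get $\nY\le\lambda'\nX$ and $\lambda'\nX=\tfrac\mu\lambda\nX\le\tfrac\mu\lambda\cdot\lambda\nY=\mu\nY$, i.e. $\nY\le\lambda'\nX\le\mu\nY$. Thus $d([\nY],[\nX])\le\mu=d([\nX],[\nY])$, and the reverse inequality follows by exchanging the roles of $X$ and $Y$.

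Finally, the triangle inequality, which is where the multiplicative nature of $d$ appears but which is still a short chaining of inequalities. Let $\nX\le\lambda_1\nY\le\mu_1\nX$ and $\nY\le\lambda_2\nZ\le\mu_2\nY$ realize $d([\nX],[\nY])=\mu_1$ and $d([\nY],[\nZ])=\mu_2$. Then
$$\nX\le\lambda_1\nY\le\lambda_1\lambda_2\nZ\qquad\text{and}\qquad \lambda_1\lambda_2\nZ\le\lambda_1\mu_2\nY\le\mu_1\mu_2\nX,$$
so $\lambda_1\lambda_2$ is an admissible $\lambda$ with bound $\mu_1\mu_2$, giving $d([\nX],[\nZ])\le\mu_1\mu_2$. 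I do not anticipate any real obstacle: the only genuinely delicate point is the existence of admissible constants $\lambda,\mu$ and the attainment of the minimum, and both are supplied by the equivalence of continuous quasinorms in finite dimensions and by Lemma~\ref{soloy}; everything else is bookkeeping with pointwise inequalities.
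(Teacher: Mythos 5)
Your proof is correct and follows essentially the same route as the paper's: a direct verification of the three axioms by manipulating the pointwise inequalities, with the multiplicative triangle inequality obtained by chaining $\nX\le\lambda_1\nY\le\lambda_1\lambda_2\nZ\le\mu_1\mu_2\nX$ exactly as in the paper. The only difference is that you spell out well-definedness on equivalence classes and the $d\ge 1$ and equality cases, which the paper dismisses as obvious.
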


\begin{proof}
We need to show that $d$ fulfils the following: 
\begin{enumerate}
\item $d([\nX],[\nY])=1$ if and only if $\nX$ and $\nY$ are proportional. 
\item $d([\nX],[\nY])=d([\nY],[\nX])$. 
\item $d([\nX],[\nY])\leq d([\nX],[\nZ])d([\nZ],[\nY])$. 
\end{enumerate}

The first item is obvious since we have taken the quotient exactly for this.  \\
It is clear that 
\begin{equation}
\begin{split}
d([\nX],[\nY])=&\min\{\mu:\nX\leq\nY\leq\mu\nX\} \\
=&\min\{\mu:\nX\leq\nY\leq\mu\nX\leq\mu\nY\} \\
=&\min\{\mu:\nY\leq\mu\nX\leq\mu\nY\} \\
=&d([\nY],[\nX]),
\end{split}
\end{equation}
so the second item also holds. 

For the third item, let $\mu=d([\nX],[\nZ]), \mu'=d([\nZ],[\nY])$. 
There exist $\lambda, \lambda'$ such that 
$$\nX\leq\lambda\norma_Z\leq\mu\nX\text{\quad and\quad }\norma_Z\leq\lambda'\nY\leq\mu'\norma_Z.$$ 
Joining these inequalities, we obtain 
$\nX\leq\lambda\nZ\leq\lambda\lambda'\nY\leq \lambda\mu'\nZ\leq\mu\mu'\nX.$ 
This readily implies that $d([\nX],[\nY])\leq\mu\mu'=d([\nX],[\nZ])d([\nZ],[\nY])$. 
\end{proof}

In order to define the operations in $\Q$, we need the following: 

\begin{definition}\label{definterp}
Let us denote $X=(\R^n,\nX)$ and $Y=(\R^n,\nY)$ and let $\theta\in[0,1]$. 
We will call the space $\R^n$ endowed with the quasinorm 
$$\norma_{(X,Y)_\theta}=\nX^\theta\nY^{1-\theta}$$ 
the interpolated space between $X$ and $Y$ at $\theta$ and will denote it as 
$(X,Y)_\theta$. 
\end{definition}

\begin{nota}\label{notainfty}
Observe that this kind of interpolation can not be applied directly to 
infinite-dimensional spaces unless we consider only equivalent quasinorms on a 
given space. 
\end{nota}

\begin{nota}\label{nota0}
When dealing with vector spaces, it is customary to have clear which vector is 
the origin of the space, in function spaces it is the 0 function, in spaces of 
sequences it is the sequence $(0,0,\ldots)$. But we are giving a vector space 
structure to a set without a clear 0, so we need to choose it. The idea behind 
this work is that we have been given a kind of {\em mean} of two norms in a 
quite intuitive way --for our purposes, the most suitable candidate to be the 
mean of $\nX$ and $\nY$ is $$\norma_{(X,Y)_{1/2}}=\nX^{1/2}\nY^{1/2}.$$
Of course, this means that when we choose the origin of our space, we are given 
the opposite $\norma_{\opX}$ for each $\nX$. The central rôle that the Euclidean 
norm plays in the classical analysis could be enough for it to be our origin, 
but there is another reason for choosing it. When we think of a non strictly 
convex norm, it seems that it is, in some faint sense, an extreme point of a 
segment. A visual way to explain this is the {\em curve} 
$\{[\norma_p]:p\in[1,\infty]\}$. If you reach a non strictly convex norm like 
$[\norma_1]$ or $[\norma_\infty]$ and you keep going in the same direction 
you will find that what you are 
dealing with is not convex any more. In this sense, the Euclidean norm is the 
most convex norm and it deserves to be the centre of our vector space. 
The space $(\R^n,\norma_2)$ is, up to isometric isomorphism, the only homogeneous 
$n$-dimensional space and so, the one with the greatest group of isometries. 
So, we have defined our vector space as follows: 
\end{nota}

\begin{definition}\label{operations}
Let $n\in\N$ and consider $\Q$ as the quotient of the set of quasinorms
on $\R^n$ by the equivalence relation of dilating quasinorms. We consider $\left[\norma_2\right]$
as the origin of our space and the mean of two classes of quasinorms as 
$$([\nX],[\nY])_{1/2}=\left[\nX^{1/2}\nY^{1/2}\right],$$ 
so the opposite of some $[\nX]$ is $[\norma_{\opX}]$, where 
$$\norma_{\opX}=\frac{\norma^2_2}{\nX}$$ 
on $\R^n\setminus\{0\}$ and $\|0\|_{\tilde X}=0$; 
the {\em scalar multiplication} is given by 
$$\theta\star\left[\nX\right]=\left[\norma_X^\theta\norma_2^{1-\theta}\right],\quad 
-\theta\star\left[\nX\right]=\left[\norma_{\wopX}^\theta\norma_{2}^{1-\theta}\right]$$
for $\theta\in[0,\infty)$; and the {\em addition} of two classes of quasinorms by 
$$\left[\nX\right]\oplus\left[\nY\right]=2\star\left[\norma_{(X,Y)_{1/2}}\right].$$ 
\end{definition}

\begin{theorem}\label{thmq}
With the above operations, $\Q$ is a linear space. If we, moreover, define 
$$ \big|\!\big|\!\big| \|\cdot\|_X \big|\!\big|\!\big|=\log_2(d(\|\cdot\|_X,\norma_2)),$$ 
then $\left(\Q,\nnn\cdot\nnn\right)$ is a Banach space where the set of 
equivalence classes of norms in $\R^n$ is closed. 
\end{theorem}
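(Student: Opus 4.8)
The plan is to identify $(\Q,\nnnn)$, isometrically and linearly, with a very concrete space. A continuous quasinorm $\nX$ is positively homogeneous, hence determined by its restriction $f_X:=\nX|_{S_2}$ to the Euclidean unit sphere, which is an even, strictly positive continuous function; conversely, given any even continuous $f:S_2\to(0,\infty)$, its positively homogeneous extension $\rho(x)=\|x\|_2\,f(x/\|x\|_2)$ (with $\rho(0)=0$) has closed unit ball $\{\rho\le1\}$, which is balanced, contains $(\max_{S_2}f)^{-1}B_2$ and sits inside $(\min_{S_2}f)^{-1}B_2$, so by Theorem~\ref{rho} $\rho$ is a continuous quasinorm with $\rho|_{S_2}=f$. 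Since two quasinorms are proportional exactly when the functions $f_X$ differ by a positive multiplicative constant, i.e.\ when the $\log_2 f_X$ differ by an additive constant, the map
\[
\Phi:\Q\longrightarrow C_{\mathrm{even}}(S_2)/\R,\qquad \Phi([\nX])=[\log_2 f_X]
\]
is a well-defined bijection, where $C_{\mathrm{even}}(S_2)$ is the Banach space of even continuous real functions on $S_2$ and $\R$ denotes its closed subspace of constant functions.

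Next I would transport the operations of Definition~\ref{operations} along $\Phi$. From $\log_2(\nX^{\theta}\nY^{1-\theta})=\theta\log_2\nX+(1-\theta)\log_2\nY$ and the facts that $\norma_2$ restricts to the constant $1$ on $S_2$ and $\norma_{\opX}=\norma_2^2/\nX$ restricts to $1/f_X$, a one-line computation shows that $\Phi$ sends the mean $([\nX],[\nY])_{1/2}$ to the midpoint $\tfrac12([\log_2 f_X]+[\log_2 f_Y])$, the opposite $[\norma_{\opX}]$ to $-[\log_2 f_X]$, the scalar multiple $\theta\star[\nX]$ to $\theta\,[\log_2 f_X]$ for every real $\theta$, and the sum $[\nX]\oplus[\nY]$ to $[\log_2 f_X]+[\log_2 f_Y]$. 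In particular these formulas are independent of the chosen representatives, and, corresponding under the bijection $\Phi$ to the honest vector-space operations of $C_{\mathrm{even}}(S_2)/\R$, they make $\Q$ a linear space with $\Phi$ a linear isomorphism.

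For the norm I would evaluate $d(\nX,\norma_2)$ explicitly: by homogeneity, $\nX\le\lambda\norma_2\le\mu\nX$ holds iff $\max_{S_2}f_X\le\lambda$ and $\lambda\le\mu\min_{S_2}f_X$, and minimizing $\mu$ over the admissible $\lambda$ gives $d(\nX,\norma_2)=\max_{S_2}f_X/\min_{S_2}f_X$, so that
\[
\nnn\nX\nnn=\max_{S_2}\log_2 f_X-\min_{S_2}\log_2 f_X .
\]
Thus $\Phi$ carries $\nnnn$ to the oscillation norm $\|[g]\|:=\max_{S_2}g-\min_{S_2}g$ on $C_{\mathrm{even}}(S_2)/\R$; this is genuinely a norm (it vanishes precisely on constants, so $\nnnn$ is well defined on $\Q$) and it is proportional to the quotient norm, since $\inf_{c\in\R}\|g-c\|_\infty=\tfrac12(\max_{S_2}g-\min_{S_2}g)$, the optimal $c$ being the mid-range. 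As $C_{\mathrm{even}}(S_2)$ is complete and $\R$ closed in it, $C_{\mathrm{even}}(S_2)/\R$ is a Banach space for the quotient norm, hence for the equivalent oscillation norm; via $\Phi$ this exhibits $(\Q,\nnnn)$ as a Banach space.

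For the closedness of $\NN$ I would use that a continuous quasinorm is a norm iff it is convex as a function on $\R^n$, i.e.\ iff the positively homogeneous extension of the corresponding $f=2^{g}$ is convex. Writing $\pi$ for the quotient map $C_{\mathrm{even}}(S_2)\to C_{\mathrm{even}}(S_2)/\R$, the set $W$ of such $g$ is a union of cosets of $\R$ (scaling $f$ by a positive constant does not affect convexity of its extension), so $\pi(W)$ is closed iff $W$ is; and $W$ is closed because a uniform limit on $S_2$ produces a pointwise limit of the corresponding homogeneous extensions on $\R^n$, and pointwise limits of convex functions are convex. Hence $\NN=\Phi^{-1}(\pi(W))$ is closed in $\Q$. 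The two points I expect to need genuine care are the verification that $\{f\le1\}$ satisfies the hypotheses of Theorem~\ref{rho} for every even positive continuous $f$ (so that $\Phi$ is surjective) and the explicit computation of $d(\nX,\norma_2)$; everything else is routine bookkeeping transported along $\Phi$.
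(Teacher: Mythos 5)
Your proof is correct, but it takes a different route from the one the paper uses for Theorem~\ref{thmq}. The paper argues directly on the quasinorms: it checks that the interpolation formulas produce continuous quasinorms via Theorem~\ref{rho}, establishes homogeneity and additive invariance of $d$ by hand using the attainment statement of Lemma~\ref{soloy} (choosing contact points $x\in S_X\cap S_Y$ and $y\in S_Y\cap\mu^{-1}S_X$), proves completeness by normalizing a Cauchy sequence at $e_1$, passing to pointwise limits and invoking Theorem~\ref{rho} again, and gets closedness of $\NN$ by citing Khare's completeness theorem for the metric on norms. You instead transport everything through the identification $[\nX]\mapsto[\log_2(\nX|_{S_2})]$ with $C_{\mathrm{even}}(S_2)/\R$, under which the operations become the usual linear ones, $\nnnn$ becomes the oscillation norm (twice the quotient norm), and completeness and the vector-space axioms come for free from standard facts about $C(K)$ and its quotients; your closedness argument for $\NN$ via ``pointwise limits of convex functions are convex'' is also self-contained and arguably cleaner than the appeal to \cite{Apoorva}. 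The paper in fact records exactly your identification, phrased on the projective space $\P_{n-1}$, in Note~\ref{Pn} \emph{after} the theorem, but does not use it as the proof. What your route buys is economy and a transparent reason why the axioms hold (they are inherited from an honest Banach space); what the paper's route buys is the hands-on control of contact points of unit spheres (Lemma~\ref{soloy}) that it reuses later, e.g.\ in the proof of Theorem~\ref{denso}. The two computations you flag as needing care (surjectivity onto even positive continuous functions via Theorem~\ref{rho}, and $d(\nX,\norma_2)=\max_{S_2}f_X/\min_{S_2}f_X$) do check out, so there is no gap.
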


\begin{proof}
It is easy to see that, whenever $\nX$ and $\nY$ are quasinorms over a 
finite-dimensional space $\R^n$ and $\theta>0$, the subset 
$$B_\theta=\{x\in\R^n:\|x\|_X^\theta\|x\|_Y^{1-\theta}\leq 1\}$$
is bounded, absorbing, and balanced, and its boundary is bounded away from 0, so 
Theorem~\ref{rho} implies that $\nX^\theta\nY^{1-\theta}$ is a quasinorm, and 
it is clear that it is continuous. 
So, this kind of {\em extrapolation} of quasinorms is well defined. 
In order to show that $\Q$ is a linear space we need to show that the scalar 
multiplication and the addition are well-defined. On the one hand, it is clear 
that the operations do not depend on the representative of any class of 
quasinorms. On the other hand, all the expressions in Definition~\ref{operations} 
give rise to a continuous quasinorm. 

In~\cite[Theorem~1.18]{Apoorva} it is seen that the distance we are dealing with 
is complete on $\NN$, and this implies that $\NN$ is closed in any metric space 
where it is isometrically embedded, in particular in $\Q$. Anyway, it is not 
hard to see that its complement $\Q\setminus\NN$ is open. 

Now, we need to show that $d$ is absolutely homogeneous and 
additively invariant. 

For the homogeneity, let $\theta\in(0,\infty)$ and take any $\nX$ such that 
$\nX\geq\norma_2$ and $S_X\cap S_2\neq\emptyset$. Then, $(\nX,\norma_2)_\theta$ 
fulfils the same, i.e., $(\nX,\norma_2)_\theta\geq \norma_2$ and 
$S_{(\nX,\norma_2)_\theta}\cap S_2\neq\emptyset$. 
Moreover, if we take $y\in S_2$ such that 
$$d([\nX],[\norma_2])=\|y\|_X,$$
then it is quite clear that 
$$d([(\nX,\norma_2)_\theta],[\norma_2])=\|y\|_X^\theta.$$

For negative values of $\theta$ we only need to see what happens when 
$\theta=-1$, but it is easily seen that 
$d([\nX],[\norma_2])=d([\norma_{\opX}],[\norma_2])$. 

To see that $d$ is additively invariant, take 
$\nX,\nY,\nZ$. We just need to show that 
$$d([(\nX,\nZ)_{1/2}],[(\nY,\nZ)_{1/2}])=d([\nX],[\nY])^{1/2}.$$
For any $z\in\R^n, z\neq 0$ one has 
\begin{equation}\label{XY12}
\frac{\|z\|_{(X,Z)_{1/2}}}{\|z\|_{(Y,Z)_{1/2}}}=
\frac{\|z\|_X^{1/2}\|z\|_Z^{1/2}}{\|z\|_Y^{1/2}\|z\|_Z^{1/2}}=
\left(\frac{\|z\|_X}{\|z\|_Y}\right)^{1/2}.
\end{equation}
Let $\mu=d([\nX],[\nY]).$ 
Applying Lemma~\ref{soloy} we may suppose that $\nX\leq\nY\leq\mu\nX$ and 
choose $x,y$ such that $\|x\|_X=\|x\|_Y=1,$ $\|y\|_X=1/\mu$ and $\|y\|_Y=1.$ 
Now (\ref{XY12}) implies that 
\begin{equation*}
\begin{split}
d([\nX],[\nY])^{1/2}= & \mu^{1/2}= 
\left(\frac{\|x\|_X}{\|x\|_Y}\frac{\|y\|_X}{\|y\|_Y}\right)^{1/2}= 
\frac{\|x\|_{(X,Z)_{1/2}}}{\|x\|_{(Y,Z)_{1/2}}}
\frac{\|y\|_{(X,Z)_{1/2}}}{\|y\|_{(Y,Z)_{1/2}}} \\ 
\leq & d([(\nX,\nZ)_{1/2}],[(\nY,\nZ)_{1/2}])
\end{split}
\end{equation*}
Applying Lemma~\ref{soloy} to $(\nX,\nZ)_{1/2}$ and $(\nY,\nZ)_{1/2}$ 
we see that the symmetric inequality also holds. 

It remains to show the completeness of our norm. 
Take a Cauchy sequence 
$$\big(\big[\norma^1\big],\big[\norma^2\big],\ldots,\big[\norma^k\big],\ldots\big)\subset\Q.$$ 

We may choose a representative of each class, so we may suppose that 
$\norma^k(e_1)=1$ for every $k\in\N.$ 
Every Cauchy sequence is bounded, so we may take $\e, M>0$ such that 
\begin{equation}\label{boundedneigh}
\e\norma_2\leq\norma^k\leq M\norma_2 \text{ for every }k.
\end{equation}
With this in mind, the very definition of $\nnnn$ implies that for every 
$x\in\R^n$ the sequence $\|x\|^k$ is also Cauchy, so we may define 
$\|x\|_X$ as the limit of $\|x\|^k$ as $k\to\infty$. By (\ref{boundedneigh}) 
we have that $B_X$ is a bounded, balanced, neighbourhood of 0, so $\norma_X$ 
is a quasinorm and it is continuous because, locally, it is the uniform limit of 
continuous quasinorms. It is easy see that it is the limit of the sequence, 
and this implies that $\nnnn$ is complete on $\Q$. 
\end{proof}

\begin{remark}\label{Nocvx}
The set of norms is not convex in $\Q$. In fact, if we define 
$\|(a,b)\|_X=2|a|+|b|/2,$ $\|(a,b)\|_Y=2|b|+|a|/2$ then we have 
$$\|(1,0)\|_X=2=\|(0,1)\|_Y,\quad \|(0,1)\|_X=1/2=\|(1,0)\|_Y,$$ 
but $\|(1,1)\|_X=5/2=\|(1,1)\|_Y$, which implies that 
$$\|(1,1)\|_{(X,Y)_{1/2}}>\|(1,0)\|_{(X,Y)_{1/2}}+\|(0,1)\|_{(X,Y)_{1/2}}.$$
\end{remark}

\begin{figure}[ht]
\begin{center}
\begin{tikzpicture}
  \draw[dashed, thick] (0,-2) node[below] {$S_X$} -- (0.5, 0) -- (0,2) -- (-0.5, 0) -- (0,-2);
  \draw[dotted, thick] (-2,0) -- (0, 0.5) -- (2,0) node[below right] {$S_Y$} -- (0, -0.5) -- (-2,0);
  \draw[-, domain=0:1.0, smooth, thick, variable=\y] plot ({\y}, {(-17*\y+sqrt(225*\y*\y+64))/8});
  \draw[-, domain=0:1.0, smooth, thick, variable=\y] plot ({-\y}, {(-17*\y+sqrt(225*\y*\y+64))/8});
  \draw[-, domain=0:1.0, smooth, thick, variable=\y] plot ({-\y}, {-(-17*\y+sqrt(225*\y*\y+64))/8});
  \draw[-, domain=0:1.0, smooth, thick, variable=\y] plot ({\y}, {-(-17*\y+sqrt(225*\y*\y+64))/8});
\end{tikzpicture}
\caption{The spheres of the three quasinorms of Remark~\ref{Nocvx}.} 
\end{center}
\end{figure}
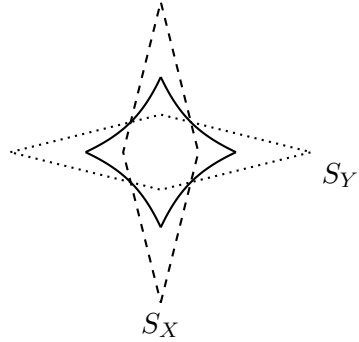

\begin{nota}\label{Pn}
We can describe the space $\Q$ as some $C(K)$. 
Namely, let $\P_{n-1}$ be the projective space of dimension $n-1$, i.e., 
$$\P_{n-1}=\big(\R^n\setminus\{0\}\big)/\coc, \text{ with } x\sim y \text{ if and only if }
x=\lambda y \text{ for some }\lambda\in\R\setminus\{0\},$$ 
endowed with the quotient topology relative to the projection 
$\R^n\setminus\{0\}\to \big(\R^n\setminus\{0\}\big)/\coc,\ x\mapsto[x]$.
In the sequel, we will think the projective space of dimension $n-1$ as the 
quotient $S^{n-1}/\coc$, where $x\sim y$ if and only if $x=\pm y$ and $S^{n-1}$ 
denotes the sphere of $(\R^n,\norma_2)$. 

This is essentially the same idea as that reflected in Subsection~3.2 
({Proof of the main result}) in~\cite{Apoorva}. 

As every quasinorm is absolutely homogeneous, i.e., $\|\lambda x\|=|\lambda|\|x\|$ 
for any $\lambda\in\R, x\in X$, $\norma$ is always determined by its value at every 
point of, say, the Euclidean sphere $S^{n-1}$. 
Take into account now the universal property of the quotient, that assures that any 
continuous function $f:S_X\to\R$ 
such that $f(x)=f(-x)$ gives rise to a well-defined and continuous 
$\tilde f:\P_{n-1}\to\R,\ \tilde f([x])=f(x)$. With this in mind, it is clear that each 
continuous quasinorm $\nX$ defines a continuous function $f_X:\P_{n-1}\to(0,\infty)$. 

Recall that $\P_{n-1}$ is compact --it is the continuous image of a compact 
space--, so every continuous 
$f:\P_{n-1}\to(0,\infty)$ is bounded from above and bounded away from 0, and 
we can define a quasinorm on $\R^n$ as $\|\lambda x\|_f=|\lambda|f([x])$ for every 
$\lambda\in\R,\ x\in S_2$. 

It is clear that this is a one-to-one correspondence between the space of continuous 
quasinorms $\norma:\R^n\to\R$ and the space of positive continuous functions 
$\P_{n-1}\to(0,\infty)$. If we consider again the equivalence relation 
$\nX\sim\nY\iff\nX=\lambda\nY$, $\lambda\in\R\setminus\{0\}$, then the correspondence 
still holds if we consider $C(\P_{n-1})$ endowed with the equivalence relation 
$f\equiv g\iff f=\lambda g$, $\lambda\in\R\setminus\{0\}$. So, we have a bijection 
$\Q\bij C(\P_{n-1},(0,\infty))/\!\equiv$. To end the description of $\Q$ we just need 
to consider $\log:C(\P_{n-1},(0,\infty))\to C(\P_{n-1})$, endow this space with 
the equivalence relation $f\sim g\iff f=\lambda+g$ for some $\lambda\in\R$ 
--to preserve the bijection with the former space-- and, for any 
$[f],[g]\in C(\P_{n-1})/\coc$ define the metric 
$$d([f],[g])=\max_{x\in\P_{n-1}}\{f(x)-g(x)\}-\min_{x\in\P_{n-1}}\{f(x)-g(x)\}, \text{ where } f,g 
\text{ are any representatives of }[f], [g]. $$
Observe that this value is the range of $f-g$ and that we can rewrite this as 
$$d([f],[g])=\max_{x\in\P_{n-1}}\{f(x)-g(x)\}+\max_{x\in\P_{n-1}}\{g(x)-f(x)\}, \text{ where } f,g 
\text{ are any representatives of }[f], [g]. $$ 
With this, we have $[\norma_2]\in\Q\mapsto [0]\in C(\P_{n-1})/\coc$
(see, again,~\cite{Apoorva}, Subsection~3.2). 
We also have that the map between $(\Q,\nnnn)$ and $C(\P_{n-1})/\coc$ is an onto 
isometry. Moreover, if $e_1$ is the first vector 
of the usual basis of $\R^n$, then we can see the latter space as 
$$C_0(\P_{n-1})=\{f\in C(\P_{n-1}):f([e_1])=0\}, $$
whose bijection with some space of quasinorms arises from considering only the 
quasinorms in $\Q_0$ that take value 1 at $e_1$. 

The reader interested in Projective Geometry can check out ~\cite{coxeter, richter}. 
\end{nota}

\section{The Banach-Mazur compactum}

\begin{nota}\label{matrices}
The space of $n\times n$ real matrices 
will be denoted as $\mathcal{M}_n$. 

Every time we write {\em isometry} we will mean {\em linear isometry}. 
This, by the Mazur-Ulam Theorem, means just that we will consider only isometries 
sending 0 to 0. 

As we will deal just with finite-dimensional spaces, we can fix the standard 
basis of $\R^n$, so that each operator $T:\RnX\to\RnY$ can be seen as a matrix 
$A\in\mathcal{M}_n$. We will use $AB_X=\{Ax:x\in B_X\}$ and $TB_X$ indistinctly. 

We will intertwine operators and norms and will need some notation for 
{\em the norm whose value at each $x$ is $\|Ax\|_X$ (resp. $\|Tx\|_X$)}, 
where $A\in\GL(n)$ (resp. $T$ is a linear isomorphism). This will be written as 
$A^*\nX$ (resp. $T^*\nX$).
\end{nota}

Now that we have determined the structure of $\Q$, we may relate it to the 
well known Banach-Mazur compactum. This compactum is obtained 
by endowing the set $\mathcal{N}_0$ of norms defined on $\R^n$ with the pseudometric 

$$d_{BM}(\nX,\nY)=\min\left\{\big\|T\big\|\big\|T^{-1}\big\|\right\}, $$
where the minimum is taken in 
$$\{T:(\R^n,\nX)\to(\R^n,\nY)\text{ is a linear isomorphism}\}.$$ 

This pseudometric does not distinguish between isometric norms, so the quotient 
needed to turn it into a metric is by the equivalence relation 
$$\nX\equiv\nY \text{ when there is a linear isometry }T:(\R^n,\nX)\to(\R^n,\nY).$$ 
As we are dealing with finite-dimensional spaces, 
the isomorphism $T$ can be seen as an invertible matrix of order $n$, i.e., 
$T$ is associated to some $A\in\GL(n)$. Conversely, every invertible matrix 
gives an isomorphism, so the Banach-Mazur distance can be seen as 
$$d_{BM}(\nX,\nY)=\min\{\mu:B_X\subset AB_Y\subset \mu B_X\text{ for some }A\in \GL(n)\}$$ 
and the quotient as  
$$\nX\equiv\nY \text{ if and only if there is } A\in\GL(n) 
\text{ such that } AB_X=B_Y.$$

As the equivalence relation $\sim$ that defines $\Q$ can obviously be seen as 
$$\nX\sim\nY \text{ if and only if there is } 
\lambda\neq 0 \text{ such that } \lambda B_X=B_Y,$$  
if we denote $\R^*=\R\setminus\{0\}$, then the relation between both spaces 
seems to be given by $\PGL(n,\R)=\GL(n)/\R^*$. It is, however, a little more complex. 

Let us study the fibers in $\NN=\{\text{Norms\ defined\ on\ }\R^n\}/\coc$ 
of each element of $BM=(\NN_0/\equiv)=(\NN/\equiv)$. 
Suppose we are given a norm $\nX$ whose group of autoisometries is trivial, 
i.e., the only (linear) isometries $\RnX\to\RnX$ are the identity and its 
opposite. Then, $AB_X= CB_X$ implies $A=\pm C$ and this means that the fiber 
of $[\nX]\in BM$ in $\NN$ is indeed $\{A^*\nX:A\in\GL(n)\}/\R^*$. 
However, if $\nX$ has nontrivial group of autoisometries then $AB_X=AGB_X$ 
whenever $G:\RnX\to\RnX$ is an isometry. Denoting as $\Iso_X$ this group of 
autoisometries for each $\nX$ we obtain a one-to-one relation
$$\NN \longleftrightarrow\{(\{[\nX]\}\times\PGL(n,\R))/\Iso_X:[\nX]\in BM\}.$$

Before we proceed with the main result in this section we need a couple of 
results about the group $\Iso_X$. As is customary, the distance between two 
linear operators $F,G:(\R^n,\norma_X)\to(\R^n,\norma_Y)$ is defined as the 
operator norm of its difference: 
$$d(F,G)=\|F-G\|_Y=\max\{\|F(x)-G(x)\|_Y:x\in B_X\}.$$

The first result we need is as follows:
\begin{lemma}\label{lemaaxler}
Let $F:(\R^n,\norma_X)\to(\R^n,\norma_X)$ be an isometry. Then there are linearly 
independent $u,v\in \R^n$ such that the plane $\langle u,v\rangle$ is invariant 
for $F$ and such that the matrix of the restriction of $F$ to $\langle u,v\rangle$ 
with respect to the basis $\{u,v\}$ is one of the following: 
\begin{equation}
\left(
\begin{array}{c c}
\cos(\alpha)&-\sin(\alpha)\\
\sin(\alpha)&\cos(\alpha)
\end{array}
\right), 
\qquad
\left(
\begin{array}{c c}
1 & 0\\
0 & -1
\end{array}
\right),
\end{equation}
where $\alpha\in(-\pi,\pi]$. 
\end{lemma}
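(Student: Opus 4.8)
The plan is to reduce the statement to a standard fact from linear algebra over $\R$, namely the existence of an invariant line or plane for any real operator, together with the observation that an isometry preserves the relevant geometric data. First I would recall that any linear operator $F$ on a real vector space of dimension $n\geq 2$ has either a real eigenvalue (hence a one-dimensional invariant subspace) or a pair of complex-conjugate eigenvalues $re^{\pm i\alpha}$ with $r>0$ and $\alpha\in(0,\pi)$, giving a two-dimensional invariant subspace $W$ on which $F$ acts, in a suitable basis, as $r$ times a rotation by $\alpha$; this is the usual real Jordan/rational canonical form argument applied to an irreducible quadratic factor of the characteristic polynomial. So there is always at least one invariant subspace of dimension $1$ or $2$.

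The next step is to upgrade ``invariant subspace of dimension $1$ or $2$'' to ``invariant plane on which $F$ has the desired matrix form.'' If $F$ has a real eigenvalue $\lambda$ with eigenvector $u$, then since $F$ is an isometry of $\nX$ we get $|\lambda|\,\|u\|_X=\|Fu\|_X=\|u\|_X$, so $\lambda=\pm 1$. Now I would look at the quotient operator induced by $F$ on $\R^n/\langle u\rangle$, which again has dimension $n-1\geq 1$: if $n\geq 3$ this quotient has an invariant line, whose preimage together with $\langle u\rangle$ gives an invariant plane containing $u$; choosing a complementary vector $v$ in that plane, the matrix of $F|_{\langle u,v\rangle}$ in the basis $\{u,v\}$ is upper triangular with a $\pm1$ in the top-left corner. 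A small extra argument (replacing $v$ by a suitable combination, or passing to an eigenvector of the $2\times2$ block) is needed to bring it exactly into one of the two listed forms; if $\lambda=1$ we want $\mathrm{diag}(1,-1)$, so the second diagonal entry must also be $\pm1$ by the same norm-preservation applied to the quotient isometry, and the off-diagonal term can be killed by choosing $v$ to be an eigenvector of the block. If instead $F$ has no real eigenvalue, or if we are in the complex-pair case, the invariant plane $W$ already carries $F|_W=r\cdot R_\alpha$ with $R_\alpha$ a rotation; but an isometry cannot genuinely scale on an invariant subspace — taking $x\in W$ with $\|x\|_X$ maximal on the unit ball of $W$ (a compact set) and noting $\|Fx\|_X=\|x\|_X$ forces $r=1$ — so $F|_W$ is exactly the first listed matrix.

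The case $n=2$ is the base case and must be handled directly: there either $F$ has a real eigenvalue, in which case both eigenvalues are $\pm1$ and $F$ is diagonalizable (since an isometry is diagonalizable — no nontrivial Jordan block, as that would force unbounded orbits, contradicting norm-preservation) giving one of $\mathrm{diag}(\pm1,\pm1)$, all of which are orthogonally conjugate, over the basis, to $\mathrm{diag}(1,-1)$ (for $\det=-1$) or to a rotation by $0$ or $\pi$ (for $\det=1$); or $F$ has a complex pair, and then by the maximal-vector argument $F$ is a rotation $R_\alpha$. Either way the whole space $\langle u,v\rangle=\R^2$ is the invariant plane and we are done.

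The main obstacle I anticipate is not the existence of the invariant subspace — that is textbook — but the bookkeeping to arrive at \emph{exactly} the two normal forms listed rather than some conjugate of them, and in particular ruling out the possibility that a real eigenvalue appears without a second invariant direction inside a plane also invariant (this is automatic once $n\geq 2$ by the quotient argument, but the argument must be phrased carefully so that the chosen $v$ together with $u$ really spans an invariant plane and gives one of the two matrices). A secondary subtlety is the ``no scaling on invariant subspaces'' step: one wants to argue cleanly that an isometry restricted to an invariant subspace is still an isometry of the restricted quasinorm, hence has operator norm $1$ and its inverse too, which rules out $r\neq 1$; this uses compactness of the unit sphere of the restricted quasinorm (available since we are in finite dimensions and all quasinorms are continuous). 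Everything else is routine real linear algebra.
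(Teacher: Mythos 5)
Your proposal is correct in substance and rests on the same three pillars as the paper's proof: invariant subspaces of dimension $1$ or $2$ coming from the real canonical form, the isometry forcing every real eigenvalue to be $\pm1$ and every non-real eigenvalue to have modulus $1$, and boundedness of the orbits $(F^k x)_k$ ruling out both genuine scaling and nontrivial Jordan blocks. The one genuinely different piece is your handling of the case of a real eigenvalue: the paper either takes eigenvectors for the two distinct eigenvalues $1$ and $-1$ (which span an invariant plane and give $\mathrm{diag}(1,-1)$ at once), or, when all eigenvalues coincide, uses Cayley--Hamilton to produce $v\in\ker(F-\Id)^2\setminus\ker(F-\Id)$ and reads off the Jordan block, whose orbit $F^k(v)=v+ku$ is unbounded; you instead pass to the quotient $\R^n/\langle u\rangle$ to manufacture an invariant plane through a fixed eigenvector. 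Two steps of your version need to be made precise before the argument closes. First, ``this quotient has an invariant line'' is false in general -- the induced operator may have no real eigenvalue; it becomes true once you agree to dispatch any complex-conjugate pair of $F$ through the rotation branch first, so that in the remaining case all eigenvalues of $F$, hence of the quotient operator, are real, but as written your two cases overlap and the quotient step fails on the overlap. Second, ``kill the off-diagonal term by choosing $v$ to be an eigenvector of the block'' is exactly what cannot be done when the block is a nontrivial Jordan block; the unbounded-orbit argument that you invoke only in the $n=2$ base case is what is needed here in general, and it is the core of the paper's third case. With those two points written out your proof is complete; the paper's route is slightly more economical because it works with (generalized) eigenvectors of $F$ itself and never needs the quotient construction or the quotient quasinorm.
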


\begin{proof}
If $F=\pm \Id$ then the results holds with $\alpha=0,\pi$, so we assume 
henceforth that this is not the case. It is well known that every linear 
endomorphism $F:(\R^n,\norma_X)\to(\R^n,\norma_X)$ has at least one complex 
eigenvalue $\lambda$ (see, e.g., \cite[9.8]{axler}) and non-real eigenvalues 
occur in conjugate pairs. If $\lambda=a+bi\not\in\R$, then there are 
$u,v\in X\setminus\{0\}$ such that $F(u)=au-bv$ and $F(v)=av+bu$. Let 
$|\lambda|=\sqrt{a^2+b^2}$ and $H$ be the plane generated by $u$ and $v$ 
endowed by the basis $\{u,v\}$ --observe that 
$F(H)=H$. Then, there is 
$\alpha\in(-\pi,\pi)$ such that the matrix of the restriction of $F$ to $H$ is 
\begin{equation*}
|\lambda|
\left(
\begin{array}{c c}
\cos(\alpha)&-\sin(\alpha)\\
\sin(\alpha)&\cos(\alpha)
\end{array}
\right).
\end{equation*}
As $F$ is an isometry, one has $\|F^k(u)\|_X=1$ for every $k\in\N$, so the 
sequence $(F^k(u))_k$ is bounded (with respect to every norm) and it is clear 
that this implies that $|\lambda|=1$.

Suppose, now, that every eigenvalue is real, and let $\lambda\in\R$ and $u\in S_X$ 
be such that $F(u)=\lambda u$. It is obvious that, again, $|\lambda|=1$. If $F$ has at least two different eigenvalues then we may suppose $\lambda=1$ and the other 
eigenvalue must be $-1$, so let $v$ be such that $F(v)=-v$. With respect to the basis 
$\{u,v\}$ the matrix of the restriction of $F$ to $\langle u,v\rangle$ is 
\begin{equation*}
\left(
\begin{array}{c c}
1 & 0\\
0 & -1
\end{array}
\right).
\end{equation*}

This leaves the case where all the eigenvalues of $F$ are real and are the same. 
We may suppose that all 
of them equal 1. As we are assuming that $F\neq\Id$, it is clear that the dimension 
of $\ker(F-\Id)$ is at most $n-1$ and the Cayley-Hamilton Theorem implies that 
$\dim(\ker(F-\Id)^2)>\dim(\ker(F-\Id))$. This means that we may find $v\in S_X,$ 
$u\in X\setminus\{0\},$ such that $u=F(v)-v\neq 0,$ and $ F(u)-u=(F-\Id)(u)=(F-\Id)^2(v)=0$. 
Thus, we have $F(v)=u+v$ and $F(u)=u$ and this implies that in the plane 
$\langle u,v\rangle$ endowed with the basis $\{u,v\}$, the matrix of the 
restriction of $F$ is 
\begin{equation}
\left(
\begin{array}{c c}
1 & 1\\
0 & 1
\end{array}
\right),
\end{equation}
which leads to the matrix of $F^k$:
\begin{equation}
\left(
\begin{array}{c c}
1 & k\\
0 & 1
\end{array}
\right). 
\end{equation}
So, the sequence $(F^k(v))_k=(ku+v)_k$ is unbounded and we are done. 
\end{proof}

\begin{remark}
It is easy to see that the same computation as the one at the end of the proof of 
Lemma~\ref{lemaaxler} rules out the option that $F$ has some Jordan block of the form: 
\begin{equation}
\left(
\begin{array}{c c c c}
\cos(\alpha) & -\sin(\alpha) & 1 & 0 \\
\sin(\alpha) &  \cos(\alpha) & 0 & 1 \\
0 & 0 & \cos(\alpha) & -\sin(\alpha) \\
0 & 0 & \sin(\alpha) &  \cos(\alpha) 
\end{array}
\right),
\end{equation}
so every isometry is diagonalizable over $\C$. 
\end{remark}

\begin{remark}
We have not used the fact that $F$ is an isometry, we merely needed that the 
sequence $(\|F^k\|_X)_k$ is bounded and bounded away from 0. 
\end{remark}

\begin{proposition}\label{notrivial}
Whenever $\norma_X$ has nontrivial group of isometries, there is some 
autoisometry $F:(\R^n,\norma_X)\to(\R^n,\norma_X)$ 
such that $\min\{\|F+\Id\|_X,\|F-\Id\|_X\}\geq 1$. 
\end{proposition}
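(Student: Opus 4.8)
The plan is to manufacture the required autoisometry from the way a single autoisometry acts on one invariant plane, using the normal form of Lemma~\ref{lemaaxler} and a short planar estimate. First I would take, by hypothesis, an autoisometry $F\neq\pm\Id$. If $1$ and $-1$ are both eigenvalues of $F$ (in particular whenever $F$ is an involution $\neq\pm\Id$), choose $u,v\neq 0$ with $Fu=u$, $Fv=-v$; then $\|F-\Id\|_X\geq\|(F-\Id)v\|_X/\|v\|_X=2$ and $\|F+\Id\|_X\geq\|(F+\Id)u\|_X/\|u\|_X=2$, so $F$ itself works.

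From now on I would assume $1$ and $-1$ are not both eigenvalues of $F$. By the Remark following Lemma~\ref{lemaaxler} (every isometry is diagonalizable over $\C$) together with the computation in the proof of Lemma~\ref{lemaaxler} (all eigenvalues have modulus $1$), if $F$ had only real eigenvalues they would all be $1$ or all be $-1$, forcing $F=\pm\Id$; hence $F$ has a non-real eigenvalue. Lemma~\ref{lemaaxler} then gives an $F$-invariant plane $H=\langle u,v\rangle$ on which $F$ acts, in the basis $\{u,v\}$, as the rotation matrix $R_\alpha$ with $\alpha\in(-\pi,\pi)\setminus\{0\}$; replacing $F$ by $F^{-1}$ if needed (still an autoisometry, acting as $R_{-\alpha}$ on $H$), I may take $\alpha\in(0,\pi)$.

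Next I would pass to a power of $F$ whose rotation angle lies in the window $[\pi/3,2\pi/3]$. The set $\{k\alpha\bmod 2\pi:k\in\N\}$ meets $[\pi/3,2\pi/3]$: it is dense in $[0,2\pi)$ if $\alpha/\pi$ is irrational, and if $\alpha=2\pi p/q$ in lowest terms then $q\geq3$ (because $0<\alpha<\pi$) and $\{2\pi j/q:0\leq j<q\}$ meets $[\pi/3,2\pi/3]$ since $[q/6,q/3]$ contains an integer for every $q\geq3$. Fix $k$ with $k\alpha\equiv\theta\pmod{2\pi}$, $\theta\in[\pi/3,2\pi/3]$, and put $G=F^{k}$, an autoisometry acting on $H$ as $R_\theta$. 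Identify $H$ with $\R^2$ via $\{u,v\}$, write $N$ for $\|\cdot\|_X$ transported to this $\R^2$ and $|\cdot|$ for its standard Euclidean norm, let $r=\max\{|w|:N(w)\leq1\}$, and pick $w_0$ with $N(w_0)=1$ and $|w_0|=r$. Then $N\geq|\cdot|/r$, while $R_\theta$ being an ordinary rotation gives $|R_\theta w_0-w_0|=2\sin(\theta/2)\,r$ and $|R_\theta w_0+w_0|=2\cos(\theta/2)\,r$; since $(G-\Id)w_0,(G+\Id)w_0\in H$ and $N(w_0)=1$,
$$\|G-\Id\|_X\ \geq\ N(R_\theta w_0-w_0)\ \geq\ 2\sin(\theta/2)\ \geq\ 1,\qquad\|G+\Id\|_X\ \geq\ N(R_\theta w_0+w_0)\ \geq\ 2\cos(\theta/2)\ \geq\ 1,$$
the last inequalities because $\theta/2\in[\pi/6,\pi/3]$. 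Thus $\min\{\|G+\Id\|_X,\|G-\Id\|_X\}\geq1$.

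The step I expect to be the real content is the planar estimate: on $H$ the norm is only known to be $R_\alpha$-invariant, not rotation-invariant, so a priori $\|G\pm\Id\|_X$ could be small; the device that removes this is to test the operator norm at a point $w_0$ of maximal Euclidean length on the $N$-unit sphere, where the one-sided comparison $N\geq|\cdot|/r$ is saturated and the Euclidean picture controls everything. The only other point needing care is that some power of $F$ reaches $[\pi/3,2\pi/3]$; this is exactly where the hypothesis enters, since an autoisometry that is not $\pm\Id$ and lacks both $\pm1$ as eigenvalues has a non-real eigenvalue, whose argument lies in $(0,\pi)$ and so is never the degenerate value $\pi$ (equivalently $q\neq2$), the one angle from which powers never leave $\{0,\pi\}$.
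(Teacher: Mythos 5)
Your proposal is correct, and it shares with the paper only the first reduction: both arguments invoke Lemma~\ref{lemaaxler} to split into the case where $1$ and $-1$ are both eigenvalues (where $\|F\pm\Id\|_X=2$ comes for free) and the case of a non-real eigenvalue, i.e.\ an invariant plane $H$ on which $F$ acts as the rotation $R_\alpha$ in the basis $\{u,v\}$. From that point on the two proofs diverge. The paper performs a case analysis on $\alpha$: for $\alpha=\pi/m$ with $m$ even it uses the midpoint of the half-orbit and the triangle inequality; for $m$ odd it studies the chord joining $x_{(m\pm1)/2}$, the possible regular $2m$-gon inside $S_X\cap H$, and convexity of $B_X$; for general rational $\alpha$ it reduces to $\pi/q$ via the Chinese Remainder Theorem; and for irrational $\alpha$ it uses density of the orbit to force $\nX|_H$ to be Euclidean. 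You replace all of this with a single uniform device: pass to a power $G=F^k$ whose angle $\theta$ lies in $[\pi/3,2\pi/3]$ (possible for every $\alpha\in(0,\pi)$, rational or not, since $q\geq 3$ in the rational case), introduce the auxiliary inner product on $H$ making $\{u,v\}$ orthonormal, and test $G\pm\Id$ at a point $w_0$ of maximal auxiliary Euclidean length $r$ on the $\nX$-unit sphere of $H$, where $\nX\geq|\cdot|/r$ yields $\|G-\Id\|_X\geq 2\sin(\theta/2)\geq1$ and $\|G+\Id\|_X\geq 2\cos(\theta/2)\geq1$ simultaneously. This buys a shorter, case-free treatment of the rotation situation; it also never uses convexity of $B_X$ (the paper's odd-$m$ case does), so your argument applies verbatim to continuous quasinorms, and it produces an explicit quantitative lower bound $2\sin(\pi/6)=1$ from a single extremal point rather than from the combinatorics of the orbit. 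The paper's route, in exchange, gives finer geometric information (e.g.\ identifying when $S_X\cap H$ must be a regular $2m$-gon or a Euclidean circle), which your argument deliberately bypasses.
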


\begin{proof}
Let $F\in\Iso_X, F\neq\pm\Id$. Then 
$\max\{\|F(x)+x\|_X,\|F(x)-x\|_X\}\leq 2$ for every $x\in S_X$, so 
$\max\{\|F+\Id\|_X,\|F-\Id\|_X\}\leq 2$. 
If all the eigenvalues of $F$ are real, then the proof of Lemma~\ref{lemaaxler}
implies that there are $u,v\in S_X$ such that $\|F(u)+u\|_X=2, \|F(v)-v\|_X=2$, 
so we actually have $\|F+\Id\|_X=\|F-\Id\|_X=2$. 

If some eigenvalue is not real, say $\lambda=a+bi, b\neq 0$, then we know 
by Lemma~\ref{lemaaxler} that $|\lambda|=1$. Let $u\in S_X,v\in X\setminus\{0\}$ 
be such that the matrix of the restriction of $F$ to $H=\langle u,v\rangle$ is, 
with respect to the basis $\{u,v\}$, the rotation of angle $\alpha$
$$\left(
\begin{array}{c c}
\cos(\alpha)&-\sin(\alpha)\\
\sin(\alpha)&\cos(\alpha)
\end{array}
\right)$$ 
for some $\alpha\in(-\pi,\pi]$ --the existence of such a basis is outlined in the proof of Lemma~\ref{lemaaxler}. If $\alpha>\pi/2$ (respectively 
$\alpha\leq -\pi/2$) then we may compose with $-\Id$ and get the rotation 
of angle $-\pi+\alpha$ (respectively $\pi+\alpha$), so 
we may suppose $\alpha\in(-\pi/2,\pi/2]$. If $\alpha<0$ then the inverse of 
$F_{|H}$ is the rotation of angle $-\alpha$, so we only need to deal with 
$\alpha\in [0,\pi/2]$. As $\alpha=0$ gives the identity, 
what we have is $\alpha\in (0,\pi/2]$. Now we have to break down 
the different options. 

If $\alpha=\pi/m$ for some $m\in\N$, then $F_{|H}^m=-\Id_{|H}$. Consider 
the {\em half-orbit} of $u$, $\{x_0=u,x_1=F(u),\ldots,x_m=F^m(u)=-u\}$. 

If $m\in 2\mathbb{Z}$, then $x_{m/2}=v$ is at the same distance from $u$ and 
$-u$ because $F^{m/2}(u)=v$ and $F^{m/2}(v)=-u$. Indeed, 
$$\|v-u\|_X=\|F^{m/2}(u)-u\|_X=\|F^{m/2}(F^{m/2}(u)-u)\|_X=\|-u-v\|_X.$$
This readily implies that 
\begin{equation}\label{NoEstr}
\min\{\|F^{m/2}-\Id\|_X, \|F^{m/2}+\Id\|_X\}\geq \|v-u\|_X=\|v+u\|_X\geq 1,  
\end{equation}
where the last inequality holds because of the triangular inequality: 
$$2=\|v+v\|_X\leq \|v-u\|_X+\|v+u\|_X=2\|v-u\|_X.$$ 

For $m\not\in 2\N$ we are going to restrict every coordinate-wise computation 
to the plane $H$ --the difference would be a certain amount of zeroes after 
the two first coordinates. 
Taking coordinates with respect to $\{u,v\}$, we have 
$$x_{(m-1)/2}=\left(\cos\left(\frac{(m-1)\pi}{2m}\right)\!\!, 
\sin\left(\frac{(m-1)\pi}{2m}\right)\right)\!,\
  x_{(m+1)/2}=\left(\cos\left(\frac{(m+1)\pi}{2m}\right)\!\!, 
\sin\left(\frac{(m+1)\pi}{2m}\right)\right).$$ 

We are going to show that $\|x_{(m-1)/2}-u\|_X\geq 1$. For this, we first need 
to check that the segment whose endpoints are $x_{(m\pm 1)/2}$ equals the 
intersection of the line that contains both of them with the unit ball $B_X$.
Observe that the first coordinate of $x_{(m+1)/2}$ is the opposite of the first 
coordinate of $x_{(m-1)/2}$ and that the second coordinates of both points agree. 
So, if we denote by $r$ the horizontal line whose height is $\sin((m+1)\pi/2m)$, 
we have $x_{(m\pm 1)/2}\in r\cap S_X$. The convexity of $B_X$ implies that if 
there are three collinear points in $S_X$, then the segment determined by them 
is included in $S_X$, too. In particular, if there is some 
$y\in (S_X\cap r)\setminus\{x_{(m\pm 1)/2}\}$, 
then the segment whose endpoints are $x_{(m\pm 1)/2}$ is included in $S_X$. 
On the one hand, this means that the {\em Euclidean} regular $2m$-gon with 
vertices in every $x_k$ is included in $S_X$ because each segment of the 
$2m$-gon is the image of this segment by some $F^k$. 
On the other hand, under these circumstances it is clear that this $2m$-gon 
is $S_X\cap H$, so in any case, $(t,\sin((m-1)\pi/2m))\in B_X$ if and only if 
\begin{equation}\label{segmento}
t\in[-\cos((m-1)\pi/2m)),\cos((m-1)\pi/2m))].
\end{equation}

If $m=3$, then $x_1=(1/2,\sqrt{3}/2)$ and $x_2=(-1/2,\sqrt{3}/2)$, so 
$x_1-u=x_2$. This implies that $\|x_1-u\|_X=\|x_2+u\|_X=1$, and also 
$\|x_1+u\|_X=\|x_2-u\|_X>1$. So, $\min\{\|F+\Id\|_X,\|F-\Id\|_X\}\geq 1$. 
 
If $m\geq 5$, then $0<\cos((m-1)\pi/2m))<\cos(\pi/3)=1/2$ and this, along with 
(\ref{segmento}), implies that 
$$x_{(m-1)/2}-u=(\cos((m-1)\pi/2m))-1,\sin((m-1)\pi/2m)))$$ 
lies outside the unit ball, so $\|F^{(m-1)/2}\pm\Id\|_X>1$. 

If $\alpha=\frac pq\pi$ for some coprime $p,q\in\N$, then the Chinese Remainder 
Theorem implies that the rotation of angle $\pi/q$ is also an isometry and we 
are in the previous case. 

If $\alpha\not=\frac pq\pi$ for any $p,q\in\N$, then the orbit of $u$ is dense 
in $S_H$ and, actually, in the sphere of $\norma_2$, i.e., in 
$\{\lambda u+\mu v\in H:\lambda^2+\mu^2=1\}$. The continuity of $\nX$ with respect 
to any norm defined over $H$ implies that $\nX$ restricted 
to $H$ is $\norma_2$ and so, any map $F^k$ that sends $u$ close enough to $v$ 
has distance to $\pm\Id$ close to $\sqrt{2}>1$. 
\end{proof}

\begin{definition}
Let $\nX$ be a norm defined over $\R^n$. We say that $\nX$ is a polyhedral norm 
or, equivalently, that $(\R^n,\nX)$ is a polyhedral space, if its closed unit ball 
is a polytope. 
\end{definition}

\begin{definition}
Given a normed space $\XnX$, we say that $x\in S_X$ is an exposed point if there 
is $f\in X^*$ such that $f(x)=1$ and $f(y)<1$ for every $y\in S_X, y\neq x$. 
We say that $x\in S_X$ is an extreme point if it does not lie in the interior 
of a segment included in $S_X$. 
\end{definition}

\begin{nota}
It is clear that if $B_X$ is a polytope, then $x\in B_X$ is extreme if and only if 
it is exposed. 
\end{nota}

We will need the following weak version of the Krein-Milman Theorem, see~\cite{KM}: 
\begin{theorem}[Krein-Milman]
The unit ball of every finite dimensional normed space is the convex hull of 
its subset of extreme points. 
\end{theorem}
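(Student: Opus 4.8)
The plan is to deduce this from the more flexible statement that every nonempty compact convex subset $K\subseteq\R^n$ is the convex hull of its set of extreme points, which I will write $\operatorname{conv}(\operatorname{ext}(K))$; the theorem then follows at once, since $B_X$ is convex and, by Theorem~\ref{rho} together with the equivalence of all norms on $\R^n$, closed and bounded, hence compact. So the real work is the statement about compact convex sets, which I would prove by induction on $n$. The base case $n=1$ is trivial: a compact convex subset of $\R$ is a (possibly degenerate) segment $[a,b]=\operatorname{conv}\{a,b\}$ with extreme points $a,b$.

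For the inductive step, fix a nonempty compact convex $K\subseteq\R^n$. If $K$ lies in a proper affine subspace, apply the inductive hypothesis there (extremality of a point of $K$ does not depend on the ambient space), so assume $\dim K=n$. The inclusion $\operatorname{conv}(\operatorname{ext}(K))\subseteq K$ is immediate, so I must show every $x\in K$ is a finite convex combination of extreme points. First suppose $x\in\partial K$. Since $K$ is compact, convex, and not all of $\R^n$, there is a supporting hyperplane $H$ at $x$, i.e.\ an affine functional $f$ with $f\le c$ on $K$, $f(x)=c$, and $H=\{f=c\}$; then $F:=K\cap H$ is a nonempty compact convex set of dimension at most $n-1$, and by induction $x\in\operatorname{conv}(\operatorname{ext}(F))$. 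The crucial point is the \emph{face lemma}, that $\operatorname{ext}(F)\subseteq\operatorname{ext}(K)$: if a point $p\in F$ were the midpoint of a nondegenerate segment $[a,b]\subseteq K$, then $f(a),f(b)\le c$ and $\tfrac12(f(a)+f(b))=f(p)=c$ would force $f(a)=f(b)=c$, hence $a,b\in F$, contradicting extremality of $p$ in $F$. Therefore $x\in\operatorname{conv}(\operatorname{ext}(K))$.

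It remains to treat $x\in\operatorname{int}(K)$: choose any line $\ell$ through $x$; compactness and convexity make $\ell\cap K$ a segment $[y_1,y_2]$ with both endpoints in $\partial K$ and $x\in[y_1,y_2]$, and by the boundary case each $y_i$ lies in $\operatorname{conv}(\operatorname{ext}(K))$, hence so does $x$. This closes the induction and proves the theorem. The only genuinely nontrivial ingredients are the existence of a supporting hyperplane at a boundary point and the face lemma, and both are standard consequences of the separation of a point from a closed convex set in $\R^n$; everything else is bookkeeping. If a quantitative form were wanted, Carathéodory's theorem would additionally let one take each convex combination to involve at most $n+1$ extreme points, but the statement as phrased does not require this.
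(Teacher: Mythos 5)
Your proof is correct, but note that the paper does not prove this statement at all: it is quoted as a known result with a citation to \cite{KM}, so there is no in-paper argument to compare against. What you have written is the classical finite-dimensional argument (usually attributed to Minkowski rather than Krein--Milman): induct on dimension, handle boundary points via a supporting hyperplane and the face lemma $\operatorname{ext}(K\cap H)\subseteq\operatorname{ext}(K)$, and reduce interior points to boundary points along a line. All the steps check out; the only ingredients you leave to ``standard separation'' (existence of a supporting hyperplane at a boundary point of a full-dimensional compact convex set, and the face lemma itself) are indeed routine in $\R^n$. Two cosmetic remarks. First, in the face lemma you phrase non-extremality as being the \emph{midpoint} of a segment; the definition only requires lying in the relative interior, but one can always re-centre the segment, and in any case your inequality argument works verbatim for an arbitrary interior convex weight $\lambda\in(0,1)$. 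Second, the paper defines extreme points of $B_X$ via segments contained in $S_X$ rather than in $B_X$; these notions coincide for unit balls (if $\|\lambda a+(1-\lambda)b\|=1$ with $\|a\|,\|b\|\le 1$ and $\lambda\in(0,1)$, convexity of $t\mapsto\|ta+(1-t)b\|$ forces the whole segment into $S_X$), so your version implies the one the paper uses. Your closing remark about Carath\'eodory is a correct but unnecessary bonus.
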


In the proof of Theorem~\ref{denso} we will also use the Brouwer fixed-point 
Theorem, see~\cite[Theorem~6]{90B} or directly,~\cite{schauder}: 

\begin{theorem}
If $C$ is a closed convex subset of a Banach space, then every compact 
continuous map $f : C\to C$ has a fixed point.
In particular, if $C$ is convex and compact, then every continuous 
map $f : C\to C$ has a fixed point. 
\end{theorem}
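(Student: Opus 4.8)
The statement has two parts: the case of a compact convex $C$ with $f$ merely continuous, and the more general case of a closed convex $C$ with $f$ compact and continuous. The plan is to extract both from the finite‑dimensional Brouwer theorem, which I would treat as the one genuinely topological ingredient, everything else being an approximation/compactness argument.

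\emph{Step 1 (finite dimensions).} First I would prove that every continuous $f\colon D\to D$ with $D\subset\R^m$ compact and convex has a fixed point. Since a compact convex $D$ is an absolute retract (the nearest‑point projection onto $D$ is a continuous retraction of any closed Euclidean ball containing $D$), it suffices to do this for $D=\overline{B}$ a closed ball, and then argue by contradiction: if $f(x)\neq x$ for all $x$, the map sending $x$ to the point where the ray from $f(x)$ through $x$ first meets $\partial B$ is a continuous retraction $\overline B\to\partial B$, which cannot exist. The non‑retraction fact itself I would establish by any of the standard routes — Sperner's lemma, or a $C^1$‑approximation together with a change‑of‑variables/Stokes computation showing that $\int_{\overline B}\det(Dr)$ cannot be simultaneously $\operatorname{vol}(\overline B)>0$ and $0$, or the homology of spheres.

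\emph{Step 2 (compact convex $C$, $f$ continuous).} Fix $\e>0$. As $C$ is compact it is totally bounded, so choose a finite $\e$‑net $x_1,\dots,x_k\in C$ and set $C_\e=\operatorname{conv}\{x_1,\dots,x_k\}\subset C$, a compact convex subset of the finite‑dimensional subspace spanned by the $x_i$. Let $p_\e\colon C\to C_\e$ be the Schauder projection associated to a partition of unity subordinate to the cover $\{B(x_i,\e)\}$; it is continuous and satisfies $\|p_\e(x)-x\|\le\e$ for every $x\in C$. Then $p_\e\circ f$ maps $C_\e$ continuously into itself, so Step 1 yields $y_\e\in C_\e$ with $p_\e(f(y_\e))=y_\e$, hence $\|f(y_\e)-y_\e\|\le\e$. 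Taking $\e=1/n$ and using compactness of $C$, some subsequence of $(y_{1/n})$ converges to a point $y\in C$, and continuity of $f$ forces $f(y)=y$.

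\emph{Step 3 (general case).} Let $C$ be closed and convex and $f\colon C\to C$ compact and continuous. By Mazur's theorem the set $K=\overline{\operatorname{conv} f(C)}$ is compact; since $C$ is closed, convex and contains $f(C)$, we have $K\subset C$, and $f$ restricts to a continuous map $K\to f(K)\subset K$. Applying Step 2 to $K$ produces the desired fixed point. The main obstacle is Step 1: Brouwer's theorem carries all of the topological weight, and there is no way around proving some form of the non‑retraction (or non‑vanishing‑vector‑field) statement. By contrast Steps 2 and 3 are soft — total boundedness, the Schauder projection, and Mazur's compactness theorem — and essentially the same approximation scheme handles both parts of the statement at once.
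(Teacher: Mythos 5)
Your argument is correct, but there is nothing in the paper to compare it against: the authors do not prove this statement, they import it as a known result (Schauder's fixed-point theorem) with citations to the literature, and then use it as a black box in the proof of Theorem~\ref{denso}. What you have written is essentially the standard textbook proof of that imported theorem, and it is sound. Step~1 (Brouwer via the non-retraction of the ball onto its boundary, extended to arbitrary compact convex $D\subset\R^m$ by nearest-point projection) is the genuinely topological ingredient, and you are right that it cannot be made ``soft''. Step~2 is the classical Schauder projection: the only points worth spelling out are that $C_\e=\operatorname{conv}\{x_1,\dots,x_k\}\subset C$ because $C$ is convex, and that $\|p_\e(x)-x\|\le\e$ because $p_\e(x)$ is a convex combination of net points within $\e$ of $x$; the limiting argument $\|f(y_{1/n})-y_{1/n}\|\le 1/n$ plus compactness and continuity then closes the case of compact convex $C$. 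Step~3 correctly reduces the general case to Step~2 via Mazur's theorem ($K=\overline{\operatorname{conv}\,f(C)}$ is compact and, since $C$ is closed and convex, $K\subset C$ and $f(K)\subset K$); note this uses the reading of ``compact map'' as ``$\overline{f(C)}$ compact'', which is the correct one for the statement to be true, and one should tacitly assume $C\neq\emptyset$. In short: a complete and correct proof of a result the paper only cites; for the purposes of the paper itself, only the second, compact-convex case is actually used (in the Brouwer fixed-point step applied to $\ch(V_i)$ in the proof of Theorem~\ref{denso}), so your Step~3 is more than the authors need.
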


Now we can proceed with the main result in this section. 

\begin{theorem}\label{denso}
Let $U=\{[\nX]\in\NN:\Iso_X=\{\Id,-\Id\}\}.$
Then, $U$ is a dense open subset of $\NN$. 
\end{theorem}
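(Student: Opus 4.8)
The plan is to prove the two parts separately: that $U$ is open, and that $U$ is dense. Openness should be the easier half. Suppose $[\norma_X]\in U$, so the only autoisometries of $\norma_X$ are $\pm\Id$. If $U$ were not open at $[\norma_X]$, there would be a sequence $[\norma_k]\to[\norma_X]$ in $\NN$ with each $\norma_k$ admitting an autoisometry $F_k\neq\pm\Id$. Normalizing representatives so that $\norma_k\to\norma_X$ uniformly on $S_2$, each $F_k$ satisfies $\|F_k\|_{k}=\|F_k^{-1}\|_{k}=1$, which (using the equivalence $\e\norma_2\leq\norma_k\leq M\norma_2$ available near $\norma_X$) forces the $F_k$ to lie in a compact subset of $\GL(n)$. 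Passing to a subsequence, $F_k\to F$ with $F$ an autoisometry of $\norma_X$, hence $F=\pm\Id$, say $F=\Id$ (the other case is symmetric). But Proposition~\ref{notrivial} gives, for each $k$, a choice of autoisometry with $\min\{\|F_k'+\Id\|_k,\|F_k'-\Id\|_k\}\geq 1$, and one must upgrade the convergence so that this uniform lower bound survives in the limit, contradicting $F=\pm\Id$. The slightly delicate point is that Proposition~\ref{notrivial} produces \emph{some} autoisometry with the gap property, not necessarily $F_k$ itself, so one should apply the compactness argument to that sequence of autoisometries instead; this is a routine repair.

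For density, the idea is to perturb an arbitrary norm $\norma_X$ slightly to kill all nontrivial isometries. Fix a representative and work on $S^{n-1}$; the isometry group $\Iso_X$ is a compact subgroup of $\GL(n)$ (compactness again from boundedness of orbits, as in Lemma~\ref{lemaaxler}). The strategy is: choose a point $p\in S^{n-1}$ whose stabilizer under $\Iso_X$ is as small as possible and, if $\Iso_X\neq\{\pm\Id\}$, build a perturbation of the unit ball that is ``dented'' near $\pm p$ in a way not respected by any $G\in\Iso_X\setminus\{\pm\Id\}$. Concretely, for small $\delta>0$ define $\norma_\delta$ via its ball $B_\delta=B_X\cap\bigcap_{G\in\Iso_X}\{x:\ell_G(x)\leq 1+\delta\,c_G\}$ for suitably chosen linear functionals and distinct constants $c_G$ — i.e., cut $B_X$ by hyperplanes near $p$ at mutually different depths indexed by the (finite, since $\Iso_X$ is compact and acts with a convex invariant body, hence... ) — wait, $\Iso_X$ need not be finite. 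So instead one should first reduce: by Proposition~\ref{notrivial} and Lemma~\ref{lemaaxler}, if $\Iso_X\neq\{\pm\Id\}$ it contains either a reflection-type element or a rotation, and in the rotation case the closure of a single orbit already forces (by the last paragraph of Proposition~\ref{notrivial}'s proof) a Euclidean circle sitting inside $S_X\cap H$; exploit this to show such norms are already non-generic, or handle them by a separate perturbation breaking the circular symmetry on $H$.

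A cleaner route for density, which I would actually pursue, is a fixed-point/transversality argument using Brouwer as the excerpt flags. Embed the problem in $\Q$ (equivalently $C_0(\P_{n-1})$ by Note~\ref{Pn}): norms form a closed convex cone $\mathcal{C}$ there, and $\GL(n)$ acts on it by $A\mapsto A^*\norma_X$. Given $\norma_X$ with a nontrivial isometry, one wants a nearby norm on which the whole $\GL(n)$-action has trivial stabilizer modulo $\pm\Id$. The plan: pick finitely many generic points $x_1,\dots,x_N\in S^{n-1}$ (in ``general position'', e.g. no nontrivial orthogonal map permutes the set $\{\pm x_i\}$), and perturb $\norma_X$ only at those points, $\norma_\delta = \norma_X\cdot(1+\delta\varphi)$ where $\varphi\in C_0(\P_{n-1})$ is a bump concentrated near $[x_1],\dots,[x_N]$ with distinct heights. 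If $G$ is an autoisometry of $\norma_\delta$, then $G$ must permute the ``fingerprint'' $\{([x_i],\text{height}_i)\}$ because these are intrinsic features of the metric (local minima/extreme behaviour of the unit sphere), so $G$ permutes the $x_i$ respecting heights, hence fixes each $[x_i]$; since $N\geq n$ points in general position determine a linear map up to sign, $G=\pm\Id$. The main obstacle — and where real care is needed — is making ``intrinsic fingerprint'' rigorous: one must ensure the perturbation creates features (e.g. strictly exposed dents, or points of strict convexity surrounded by flat pieces) that are preserved by every isometry of the perturbed norm, \emph{and} that these features pin down enough points. Brouwer enters if one instead argues by a continuity/degree argument that the map ``$G\mapsto$ induced permutation of fingerprint data'' cannot be nontrivial for small $\delta$ while the group stays compact; alternatively Brouwer is used to locate a fixed point of the barycentre-type map that would force symmetry, and its absence after perturbation yields the claim. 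I expect this verification — that the perturbation genuinely destroys \emph{all} symmetries and not merely the one we started with, uniformly for small $\delta$ — to be the crux of the argument.
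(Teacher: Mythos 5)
Your openness argument is essentially the paper's: normalize representatives, use boundedness to place the nontrivial isometries $F_k$ in a compact subset of $\GL(n)$, pass to a limiting isometry of $\norma_X$, and invoke Proposition~\ref{notrivial} to keep that limit away from $\pm\Id$. The repair you flag (run the compactness argument on the isometries furnished by Proposition~\ref{notrivial}, which are at distance at least $1$ from $\pm\Id$, rather than on arbitrary ones) is exactly what the paper does; the only point you gloss over is the continuity of $(T,\norma)\mapsto T^*\norma$, which the paper proves in detail and which is needed to see that the limit $T$ is indeed an autoisometry of the limit norm. This half is fine.

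The density half is where the real content lies, and your sketch has concrete gaps. First, the perturbation $\norma_\delta=\norma_X\cdot(1+\delta\varphi)$ need not be a norm: multiplying a norm by a positive function generically destroys convexity, and since the theorem lives in $\NN$ the perturbation must stay inside the cone of norms. The paper avoids this by building the perturbed ball as a convex hull and the perturbed norm as a maximum of norms, which is automatically a norm. Second, your general-position count is off by one: if an isometry $G$ satisfies $Gx_i=\pm x_i$ for $n$ basis vectors, you only conclude that $G$ is diagonal with entries $\pm 1$ ($2^n$ possibilities; compare $\mathrm{diag}(1,-1)$ for $\norma_\infty$ on $\R^2$), not $G=\pm\Id$. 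One needs $n+1$ points no $n$ of which lie in a hyperplane to force all the signs to agree, which is precisely why the paper adjoins the extra exposed point $x_{n+1}$. Third, and most importantly, the step you yourself call the crux --- that the perturbation creates features preserved by \emph{every} isometry of the perturbed norm, not merely by the symmetry you started with --- is left unproved, and for an arbitrary $\norma_X$ (say $\norma_2$ itself) finitely many small bumps do not obviously admit an isometry-invariant characterization of their locations. The paper resolves this by first reducing to polyhedral norms, which are dense by \cite{DFH}: after the perturbation, the patches $V_i$ are exactly the smooth exposed points of the new sphere, smoothness and exposedness are isometry invariants, the distinct diameters of the $V_i$ forbid any permutation of the patches, and Brouwer's fixed-point theorem applied to $\ch(V_i)$ yields the fixed directions $y_i$ that feed the general-position argument. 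Without the polyhedral reduction, an invariant characterization of the perturbed patches, and the $(n+1)$-st point, your density argument does not close.
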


\begin{proof}
To see that $U$ is dense we need the following fact: \\
The subset of equivalence classes of polyhedral norms is dense in $\NN$. 
This is clear from \cite[Theorem~1.1]{DFH}. 

With this fact in mind, and given some polyhedral norm $\nX$, we are going to 
sketch how to construct a norm with trivial group of isometries and whose distance 
to $\nX$ is as small as we want. The Krein-Milman Theorem implies that there is a 
basis $\B=\{x_1,\ldots,x_n\}$ such that every $x_i$ is an exposed point of $B_X$. 
Given $\delta>0$ we may consider 
$$x_{n+1}=\frac{1+\delta}{\|(1,\ldots,1)\|_X}(1,\ldots,1)$$
and the norm $\norma_{X'}$ whose unit ball is the convex hull of 
$B_X\cup\{\pm x_{n+1}\}$. On the one hand, this norm is as close as we want to 
$\nX$, so we just need to approximate $\norma_{X'}$. On the other hand, 
every $x_i$ with $i=1,\ldots,n+1$ is exposed in $B_{X'}$. Indeed, 
for each $i\in\{1,\ldots,n\}$, consider some linear $f_i:\R^n\to\R$ such 
that $f_i(x_i)=1$ and $f_i(y)<1$ for every $y\in S_X, y\neq x_i$.
It is clear that there exist $\alpha_1,\ldots,\alpha_n\in(0,\infty)$ such that 
$f_i(x_{n+1})<(1-\alpha_i)$. As there are finitely many $\alpha_i$ we can choose
$\delta>0$ so that $f_i(x_{n+1})<(1-\alpha_i)(1+\delta)<1$ for every $i=1,\ldots,n$. 
This means that, when $\delta>0$ is small enough, $f_i(x_{n+1})<1$. The only 
points in $B_{X'}$ that do not belong to $B_X$ are $x_{n+1}$ and convex combinations 
$\lambda x_{n+1}+(1-\lambda)z$ with $z\in B_X$ and $\lambda\in(0,1)$. This 
clearly implies that $f_i(y)<1$ for every $y\in B_{X'}$, $y\neq x_i$, with 
$i=1,\ldots,n.$

Now, consider some linear $f_{n+1}:\R^n\to\R$ such 
that $f_{n+1}(x_{n+1})=1$ and $f_{n+1}(y)<1$ for every $y\in S_X, y\neq x_{n+1}$. Choose a 
basis $\B^i=\{u^i_1,\ldots,u^i_n\}\subset S_X$ with $u^i_1=x_i$ and 
$u^i_j\in\ker(f_i)$ when $j\neq i$. Given some $M_i>0$ and $1>\e_i>0$ we may define 
$\norma_i$ as 
$$\|\lambda_1u^i_1+\ldots+\lambda_{n}u^i_{n}\|_i=\big(|(1+\e_i)\lambda_1|^{2i+2}+
(|\lambda_2|/M_i)^{2i+2}+\ldots+(|\lambda_{n}|/M_i)^{2i+2}\big)^{1/{(2i+2)}}.$$
If we take $\e_i$ small enough and $M_i$ big enough, then the norm 
$\nY=\max\{\norma_{X'},\norma_1,\ldots,\norma_n\}$ equals 
$\norma_{X'}$ in every point of $S_{X'}$ except for small neighbourhoods of 
$(1-\e)x_1,\ldots,(1-\e)x_{n+1}$, say $V_1,\ldots,V_{n+1}$, where the sphere 
takes the form of a variant of the $p$-norm with $p={2i+2}$. Observe that we 
may take each $e_{i+1}$ small enough and $M_{i+1}$ big enough to make the 
diameter of $V_{i+1}$ strictly smaller than that of $V_i$ and, moreover, 
we may suppose that the diameter of every $V_i$ is strictly smaller than 
the distance between any pair of $V_j, V_k$, with $i,j,k\in 1,\ldots,{n+1}$. 
\begin{claim*}
Reducing $\e$ and increasing $M$ if necessary, we may suppose 
that  every collection $y_1\in V_1,\ldots, y_{n+1}\in V_{n+1}$ are in general 
position, i.e., no hyperplane contains $n$ of them. 
\end{claim*}
\begin{proof}
This is clear from the following facts:
\begin{enumerate}
\item
$x_1,\ldots,x_{n+1}$ are in general position. 
\item 
An $n$-tuple $\{u_1,\ldots,u_n\}$ lie in the same hyperplane if and only if 
every skew-symmetric linear $n$-form vanishes when applied to it, i.e., 
$\omega(u_1,\ldots,u_n)=0$ for every (some) $\omega:(\R^n)^n:\to\R, \omega\neq 0.$ 
\item Any skew-symmetric linear $n$-form is continuous. 
\end{enumerate}
\end{proof}
This new norm $\nY$ has trivial autoisometry group. Indeed, the points in 
$V_1,\ldots,V_{n+1}$ are the only exposed points where $S_{Y}$ is smooth 
--besides $-V_1,\ldots,-V_{n+1}$. 
So, as being exposed and being smooth are properties preserved by linear isometries, 
$(\bigcup V_i)\bigcup(\bigcup -V_i)$ is invariant for any autoisometry 
$F:\RnY\to\RnY$. There is no way that $(V_i,\norma_{Y})$ is isometric to 
$(V_j,\norma_{Y}), j\neq i,$ because their diameters are different, so every 
$V_i\bigcup(-V_i)$ is invariant for $F$. 

Let us denote by $\ch(V_i)$ the convex hull of $V_i$, analogously 
$\ch(-V_i)$. As $F$ is linear, $\ch(V_i)\bigcup\ch(-V_i)$ is invariant 
for $F$, too. Now, either $F$ or $-F$ sends $V_i$ onto itself. Thus, the 
Brouwer fixed-point Theorem implies that either $F$ or $-F$ has some 
fixed point $y_i\in\ch(V_i)$. In any case, $\{F(y_i),F(-y_i)\}=\{y_i,-y_i\}$ 
for every $i=1,\ldots,n+1$. 

So, in the basis $\{y_1,\ldots,y_n\}$, the matrix of $F$ is diagonal, 
and all the diagonal entries are $\{\pm 1\}$, say the $k$-th is $\delta_k$. 
In this basis, we have $y_{n+1}=(\lambda_1,\ldots,\lambda_n)$, with 
$\lambda_1\cdots\lambda_n\neq 0$ --recall that $\{y_1,\ldots,y_{n+1}\}$ 
are in general position-- and $\{\pm y_{n+1}\}$ is also invariant, say 
$F(y_{n+1})=\delta_{n+1}y_{n+1}$. As $F$ is linear we have 
$$\delta_{n+1}(\lambda_1,\ldots,\lambda_n)=\delta_{n+1}y_{n+1}= 
F(\lambda_1,\ldots,\lambda_n)=(\delta_1\lambda_1,\ldots,\delta_n\lambda_n).$$ 
So, $\delta_1=\cdots=\delta_n=\delta_{n+1}$ and this 
implies that $F$ is either the identity or $-\Id$. 

With $\e$ close enough to 0 and $M$ great enough, $\nX'$ is as close to $\nX$ 
as we want, so $U$ is dense. 

To show that $U$ is open, let $([\norma^k])_k\subset U^c$ be a convergent 
sequence. We need to show that $[\norma]=\lim([\norma^k])$ has nontrivial group of 
isometries, i.e., that $U^c$ is closed. As the sequence of norms converges, in 
particular it is bounded, so there exists $R\in(1,\infty)$ such that 
$d([\|\cdot\|]^k,[\|\cdot\|]_2)\leq R$ for every $k\in\N$. So, for each $k$, we 
may take representatives $\norma^k$ such that 
\begin{equation}\label{2kR2}
\norma_2\leq\norma^k\leq R\norma_2,
\end{equation}
and also $\norma_2\leq\norma\leq R\norma_2$. 
Suppose that for every $\norma^k$ there exists $T_k\in\Iso_{X_k} \setminus\{\Id,-\Id\}$. 
By~\eqref{2kR2}, $1\leq\|T_kx\|_2\leq R$ for every $k\in\N$ and $x\in S_{X_k}$, 
so $(T_k)$ is uniformly bounded in $\mathcal{M}_n$ endowed with 
the Euclidean matrix norm. This implies that  $(T_k)_k$ 
must have some accumulation point $T$; we will suppose that $T$ is the limit of 
the sequence. We need to see that $T$ is an autoisometry for $\norma$ and that 
it can be chosen to be neither $\Id$ nor $-\Id$. 

For the first part, applying the triangle inequality to $\nnnn$ gives 
\begin{equation}\label{eqnnnnnT}
\begin{split}
\nnn T^*\norma-\norma\nnn & \leq 
\nnn T^*\norma-T^*_k\norma\nnn 
+\nnn T^*_k\norma-T^*_k\norma^k\nnn \\
&+\nnn T^*_k\norma^k-\norma^k\nnn 
+\nnn \norma^k-\norma\nnn. 
\end{split}
\end{equation}
The third term in the sum is 0 for every $k$ and the fourth term tends to 0 when 
$k\to\infty$, so we need to show that it is also the case for the first two terms, or, 
equivalently, that the map 
$$(T,\norma) \mapsto T^*\norma$$ 
--that assigns to each linear operator $T:\R^n\to\R^n$ and each norm $\norma:\R^n\to\R$
the norm $T^*\norma$ defined as $T^*\|x\|=\|Tx\|$-- is continuous. 
For the sake of clarity, we will denote $S=S_{X_k}$ for the remainder of the proof. 
We need to show that 
$$\lim_k\max_{y\in S}\left\{\frac{T^*_k\|y\|}{T^*\|y\|}\right\}
\max_{y\in S}\left\{\frac{T^*\|y\|}{T^*_k\|y\|}\right\}=1.$$ 
Given $k\in\N$, Lemma~\ref{soloy}, implies that we may take $y_k, z_k$ such that 
$$\max_{y\in S}\left\{\frac{T^*_k\|y\|}{T^*\|y\|}\right\}
\max_{y\in S}\left\{\frac{T^*\|y\|}{T^*_k\|y\|}\right\}=
\frac{T^*_k\|y_k\|}{T^*\|y_k\|}\frac{T^*\|z_k\|}{T^*_k\|z_k\|}$$
and one has
$$\lim_k\max_{y\in S}\left\{\frac{T^*_k\|y\|}{T^*\|y\|}\right\}
\max_{y\in S}\left\{\frac{T^*\|y\|}{T^*_k\|y\|}\right\}=
\lim_k\frac{T^*_k\|y_k\|}{T^*\|y_k\|}
\frac{T^*\|z_k\|}{T^*_k\|z_k\|}
=\lim_k\frac{\|T_ky_k\|}{\|Ty_k\|}
\frac{\|Tz_k\|}{\|T_kz_k\|}=1$$
since $\norma$ is continuous. Analogously we see that 
$$\lim_k\max_{y\in S}\left\{\frac{T^*_k\|y\|^k}{T_k^*\|y\|}\right\}
\max_{y\in S}\left\{\frac{T^*_k\|y\|}{T_k^*\|y\|^k}\right\}=1.$$ 

So, taking logarithms, the right hand side of the inequality (\ref{eqnnnnnT}) 
converges to 0 and this implies that $\nnn T^*\norma-\norma\nnn=0$, so 
$T^*\norma=\norma$ and $T$ is an isometry. 

Proposition~\ref{notrivial} implies that we can choose every $T_k$ at distance 
at least 1 from $\pm\Id$, so $T\neq\pm\Id$ and we are done.
\end{proof}

\begin{remark}
In the previous proof we have seen that $(T,\norma)\mapsto T^*\norma$ is continuous 
when $\norma$ is a norm. This is not always true when $\norma$ is a quasinorm. 
Indeed, we just need to consider $\R^2$ endowed with the quasinorm 
$$\|(x,y)\|=\left\{
\begin{array}{r l}
\|(x,y)\|_2 & \text{ if } (x,y)\not\in\{(\lambda,0):\lambda\in\R^*\} \\
\frac 12\|(x,y)\|_2 & \text{ if } (x,y)\in\{(\lambda,0):\lambda\in\R\}
\end{array}
\right.,$$
define the operators
$$T_k(x,y)=\left(\!\!
\begin{array}{c r}
\cos(\pi/k) & -\sin(\pi/k) \\
\sin(\pi/k) & \cos(\pi/k)
\end{array}
\!\!\right)
\left(\!\!
\begin{array}{c }
x \\
y
\end{array}
\!\!\right)
$$
and observe that $\nnn T^*_k\norma-T^*_l\norma\nnn$ does not depend on $k, l\in\N$ 
as long as they are different. Indeed, the operator $T_k$ is the rotation of angle 
$\pi/k$ and the only points in the sphere of $\norma$ outside the 
Euclidean sphere are $\pm(2,0)$. So, $T^*_k\|x\|=\|x\|_2$ for every $k\in\N$ 
unless $T_k(x)=(\lambda,0)$, in which case $T^*_k\|x\|=\|x\|_2/2.$ So, 
if $k$ and $l$ are different then one has 
$$\nnn T^*_k\norma-T^*_l\norma\nnn=
\log_2(d(T^*_k\norma,T^*_l\norma))=
\log_2\left(\max_{x\in S}\frac{\|T^k(x)\|}{\|T^l(x)\|}
\max_{x\in S}\frac{\|T^l(x)\|}{\|T^k(x)\|}\right)=
\log_2(4)=2, $$
where $S$ denotes the unit sphere of the norm $\norma.$

In spite of this, it is quite clear that the proof of the continuity 
of $(T,\norma)\mapsto T^*\norma$ still 
works when we deal with continuous quasinorms. 
\end{remark}

\section*{Acknowledgements}
This work has been partially supported by 
Junta de Extremadura programs GR-15152 and  IB-16056 and DGICYT 
projects MTM2016-76958-C2-1-P and PID2019-103961GB-C21 (Spain).
The second author was supported by the grant BES-2017-079901 related to the 
project MTM2016-76958-C2-1-P.

We are greatly indebted to Professors Jesús Castillo and Apoorva Khare for some 
discussions on this topic. 
It is also a pleasure to thank the referees for their careful reading, their 
deep comments about this paper and their patience. 

\bibliographystyle{abbrv}

\bibliography{NormedNorm}

\end{document}